\documentclass[12pt,a4paper,twoside,reqno]{amsart} 
\usepackage{amsfonts, amsthm, amsmath, amssymb}
\usepackage{hyperref}
\hypersetup{colorlinks=false}

\usepackage[margin=2.9cm]{geometry}

\usepackage{helvet}

\usepackage{graphicx}
\usepackage{amsmath}

\usepackage{amsthm}
\newtheorem{theorem}{Theorem}
\newtheorem{lemma}{Lemma}[section]

\newtheorem*{ack}{Acknowledgement}

\begin{document}
	
	\author{Sumit Kumar, Ritabrata Munshi, and Saurabh Kumar Singh}
	\title{ Sub-convexity bound for  $GL(3) \times GL(2)$ $L$-functions: Hybrid level aspect}

	\address{Sumit Kumar \newline  Alfr\'ed R\'enyi institute of Mathematics, Budapest, Re\'altanoda utca 13-15, 1053.}
	\email{sumitve95@gmail.com}
	\address{Ritabrata Munshi \newline  Stat-Math Unit, Indian Statistical Institute, 203 B.T. Road, Kolkata 700108, India.}
	\email{ritabratamunshi@gmail.com}
	
	\address{Saurabh Kumar Singh \newline Department of Mathematics and Statistics, Indian Institute of Technology, Kanpur-208016, India.}
	\email{skumar.bhu12@gmail.com}
	
	\subjclass[2010]{Primary 11F66, 11M41; Secondary 11F55}
	\date{\today}
	
	\keywords{Maass forms, subconvexity, Rankin-Selberg $L$-functions}.
	\maketitle
	
	\begin{abstract}
		Let $F$ be a $G L(3)$ Hecke-Maass cusp form of prime level $P_1$ and let $f$ be a $G L(2)$ Hecke-Maass cuspform of prime level $P_2$. 	In this article, we will prove a subconvex bound for the $G L(3) \times G L(2)$  Rankin-Selberg $L$-function $L(s,F\times f)$ in the level aspect for certain ranges of the parameters $P_1$ and $P_2$.
	\end{abstract}

	\section{Introduction}

		In this paper we continue our study of the subconvexity problem for the degree six $GL(3)\times GL(2)$ Rankin-Selberg $L$-functions using the delta symbol approach \cite{ICM}. In the first paper \cite{munshi12} on this theme the second author established subconvex bounds in the $t$-aspect for these $L$-functions. Since then the method has been extended by the first author and the third author together with Sharma and Mallesham (see \cite{sumit}, \cite{KMS}, \cite{KMS2}, \cite{prahlad}), to produce various instances of subconvexity in the spectral aspect and twist aspect. Indeed the delta symbol approach has worked quite well in the $t$-aspect and the spectral aspect. However its effectivity and adaptability in the more arithmetic problem of level aspect remains a point of deliberation.  
	In particular, it seems that new inputs are required to tackle the level aspect problem for such $L$-functions, especially when one of the forms is kept fixed and the level of the other varies. However, as was shown in the lower rank case of Rankin-Selberg convolution of two $GL(2)$ forms \cite{HM}, the problem can be more tractable when both the forms vary in certain relative range. The aim of the present paper is to prove such a result for $GL(3)\times GL(2)$ Rankin-Selberg convolution.

%
	\begin{theorem}\label{main theorem}
	Let $P_1$ and $P_2$ be two distinct primes. Let $F$ be a Hecke-Maass cusp form for the congruence subgroup $\Gamma_0(P_1)$ of $S L(3,\mathbb{Z})$ with trivial nebentypus. Let $f$ be a holomorphic or Maass cusp form for the congruence subgroup $\Gamma_0(P_2)$ of $SL(2,\mathbb{Z})$ with trivial nebentypus. Let $ \mathcal{Q} = P_1^2 P_2^3 $ be the arithmetic conductor of the Rankin-Selberg convolution of the above two forms. Then we have 
	\begin{align*}
		L(1/2,F\times f) \ll \mathcal{Q}^{1/4+\varepsilon}\left( \frac{P_1^{1/4}}{P_2^{3/8}}+\frac{P_2^{1/8}}{P_1^{1/4}}\right).
	\end{align*}
\end{theorem}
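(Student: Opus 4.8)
The plan is to attack Theorem~\ref{main theorem} via the delta symbol method, following the template of \cite{munshi12} for the $t$-aspect and adapting the circle-method machinery to the level aspect, in the spirit of \cite{HM} in the $GL(2)\times GL(2)$ setting. By the approximate functional equation for $L(s,F\times f)$, a dyadic decomposition, and a routine treatment of the $GL(3)$-index summation, the theorem reduces to the bound
\[
S(N):=\sum_{n\ge 1}\lambda_F(1,n)\,\lambda_f(n)\,V\!\Big(\frac{n}{N}\Big)\ \ll\ N\,\mathcal{Q}^{\varepsilon}\Big(\frac{P_1^{1/4}}{P_2^{3/8}}+\frac{P_2^{1/8}}{P_1^{1/4}}\Big),
\]
uniformly for $N\le\mathcal{Q}^{1/2+\varepsilon}=(P_1P_2^{3/2})^{1+\varepsilon}$, where $V$ is a fixed smooth bump function and the decisive range is $N\asymp\mathcal{Q}^{1/2}$. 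Since Cauchy--Schwarz together with the Rankin--Selberg estimates $\sum_{n\le X}|\lambda_F(1,n)|^2\ll X^{1+\varepsilon}$ and $\sum_{n\le X}|\lambda_f(n)|^2\ll X^{1+\varepsilon}$ only yields the convexity-strength bound $S(N)\ll N^{1+\varepsilon}$, one must genuinely save a power of $\mathcal{Q}$ throughout the stated range of $(P_1,P_2)$.

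To decouple the $GL(3)$ and $GL(2)$ oscillations I would write
\[
S(N)=\sum_{n,m}\lambda_F(1,n)\,\lambda_f(m)\,\delta(n-m)\,V\!\Big(\frac{n}{N}\Big)W\!\Big(\frac{m}{N}\Big)
\]
and expand $\delta(n-m)$ by the Duke--Friedlander--Iwaniec circle method, likely with a conductor-lowering device (an auxiliary parameter $K$ shrinking the effective moduli to $q\asymp\sqrt{N/K}$), equivalently realized through the Petersson/Kuznetsov formula for a family of auxiliary $GL(2)$ forms. This exhibits $S(N)$ as an average over $q$ and over $a\bmod q$ of a product of the two separated sums $\sum_n\lambda_F(1,n)e(an/q)\,\mathcal V(n)$ and $\sum_m\lambda_f(m)e(-am/q)\,\mathcal W(m)$, together with lower-order terms.

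Next I would apply $GL(3)$ Voronoi summation to the $n$-sum, using the level $P_1$ and the coprimality $(q,P_1)=1$, which replaces it by a dual sum — with variable, say, $\ell$ — of length roughly $q^3P_1/N$ twisted by hyper-Kloosterman sums, and $GL(2)$ Voronoi summation to the $m$-sum, using the level $P_2$, which replaces it by a dual sum of length roughly $q^2P_2/N$ twisted by additive characters. Since $q$ is prime, the ensuing $a$-sum collapses to a short character sum, imposing an essentially diagonal congruence between the two dual variables modulo $q$. I would then use Cauchy--Schwarz in the variable $\ell$, keeping $\lambda_F$ on one side and the $q$- and $GL(2)$-dual summations on the other, and follow with Poisson summation in $\ell$ along a suitable modulus; the zero frequency yields the diagonal main term and the nonzero frequencies the off-diagonal. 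Balancing the diagonal against the off-diagonal and against the loss incurred when a dual sum outruns the original length, and then optimizing $Q$ (and $K$), should produce the two-term bound, with the first term $P_1^{1/4}P_2^{-3/8}$ dominating when $P_1\gtrsim P_2$ and the second term $P_2^{1/8}P_1^{-1/4}$ when $P_2\gtrsim P_1$.

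The main obstacle I anticipate is twofold. First, the exponential sum surviving the Poisson summation carries the circle-method modulus $q$ alongside both prime levels $P_1$ and $P_2$; although $(P_1,P_2)=1$ and the generic coprimality $(q,P_1P_2)=1$ permit a Chinese Remainder Theorem factorization, one must harvest nontrivial cancellation in all three moduli simultaneously — in particular a genuine saving in the $P_2$-component to reach the first term of the bound and in the $P_1$-component to reach the second. Second, and more delicate, is the diagonal contribution of the Cauchy--Schwarz and Poisson step: it is typically the size-limiting term, it is precisely what forces the conductor-lowering parameter $K$ into the analysis, and forcing it down to the size $N\,\mathcal{Q}^{\varepsilon}(P_1^{1/4}P_2^{-3/8}+P_2^{1/8}P_1^{-1/4})$ — rather than something weaker — is the crux of the argument.
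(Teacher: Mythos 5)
Your high-level plan --- approximate functional equation, DFI delta symbol, $GL(3)$ and $GL(2)$ Voronoi, Cauchy--Schwarz in the $GL(3)$-dual variable, Poisson, split into zero and nonzero frequencies --- is exactly the paper's route. However, your diagnosis of where the crux lies is off in a way that matters. The paper uses the plain DFI delta method with $Q=\sqrt{N}$ and \emph{no} conductor-lowering parameter $K$; your claim that ``the diagonal contribution \dots\ is precisely what forces the conductor-lowering parameter $K$ into the analysis'' is not what happens here. What actually makes the count close is a structural feature you describe only loosely (``an essentially diagonal congruence''): after both Voronoi summations the $a\bmod q$ sum collapses the $GL(3)$ Kloosterman sum $S(r\bar a\bar{P}_1,\pm n_2;qr/n_1)$ against $e(m\bar a\bar{P}_2/q)$ into a genuine additive character $e\bigl(\overline{P_1 m^\star}\,P_2\,n^\star/q\bigr)$ in the $GL(3)$-dual variable $n^\star$. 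It is this additive linearity in $n^\star$ that, after Cauchy and Poisson in $n^\star$, produces an extra saving of order $\sqrt{Q}$ in the nonzero frequencies (visible in the paper's Lemma on the character sum, where the congruence on $\alpha$ fully determines the dual counting modulo $q_2 q_2'$), and this is why no auxiliary $K$ is needed. A secondary inaccuracy: you say ``since $q$ is prime''; only $P_1,P_2$ are prime, while the delta-method modulus $q\le Q$ is arbitrary, so one must factor $q=q_1q_2$ with $q_1\mid(n_1 r)^\infty$ and handle the two pieces by CRT rather than by primality of $q$. With those two points corrected, the remainder of your outline matches the paper's zero/nonzero frequency analysis, which yields $S_r(N)\ll \sqrt r\,N^{3/4}(\sqrt{P_1}+\sqrt{P_2})$ and hence the stated bound.
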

%
%
	Note that the convexity bound is given by $\mathcal{Q}^{1/4+\varepsilon}$. Thus  the above bound is subconvex in the range  
	$$
	P_2^{1/2+\epsilon} <P_1 <P_2^{3/2-\epsilon}.
	$$
	This provides the first instance of a subconvex bound in the level aspect for a degree six $L$-function which is not a character twist of a fixed $L$-function.	The bound is strongest when $P_1$ and $P_2$ are roughly of same size $P_1\approx P_2$, in which  case Theorem~\ref{main theorem} gives 
\begin{align*}
	L(1/2,F\times f) \ll \mathcal{Q}^{1/4-1/40+\varepsilon}.
\end{align*}
The exponent $1/4-1/40$ appears in other contexts as well and it seems to be the limit of the delta symbol approach.   We also note that our proof with some suitable modifications works even in the case of composite levels $P_1$ and $P_2$. But to keep the exposition  simple and clean we will only give full details for the case of prime levels. 
\\

	For a detailed introduction to automorphic forms on higher rank groups and for basic analytic properties of Rankin-Selberg convolution $L$-functions we refer the readers to Goldfeld's book \cite{DG}. Our treatment will be at the level of $L$-functions, and the Voronoi summation formulae for $GL(2)$ and $GL(3)$ are the only input that we need from the theory of automorphic forms. For a broader introduction to the subconvexity problem and its applications we refer the readers to \cite{michel-pc}, \cite{ICM}.
	\\
	
	Historically the level aspect subconvexity problem has proved to be more challenging compared to the spectral aspect or the $t$-aspect, regardless of the method adopted. Indeed Weyl shift is all one needs to prove the $t$-aspect subconvexity for $\zeta(s)$ (cf. \cite{Weyl}). Whereas Burgess had to nontrivially extend Weyl's ideas and had to invoke Riemann Hypothesis for curves over finite fields, to obtain the first level aspect subconvexity result $L(1/2,\chi)\ll q^{3/16+\varepsilon}$ (cf. \cite{Burgess}).  In 1990s Duke, Friedlander and Iwaniec (\cite{DFI},\cite{DFI-2},\cite{DFI-2.1}) used the amplification technique to obtain the level aspect subconvexity for $GL(2)$ $L$-functions. The amplification method was extended by Kowalski, Michel and Vanderkam \cite{KMV} to Rankin-Selberg convolutions $GL(2)\times GL(2)$. In \cite{venkatesh} Venkatesh used ergodic theory to study orbital integrals, and thus obtained level aspect subconvex bounds for triple products $GL(2)\times GL(2)\times GL(2)$, where two forms are fixed and one varies. A similar technique was also adopted by Michel-Venkatesh \cite{MV} for $GL(2)\times GL(2)$ $L$-functions over any number fields. The level aspect subconvexity problem for any genuine $GL(d)$ $L$-function with $d>2$ remains an important open problem.
	\\
	
	Our interest in the subconvexity problem for $GL(3)\times GL(2)$ Rankin-Selberg convolution is kindled by two factors. First there is a structural advantage which makes the $GL(n)\times GL(n-1)$ $L$-functions a suitable candidate for analytic number theoretic exploration. Indeed the case of $n=2$ has been extensively studied in the literature, as we will see below, and we want to extend to the next level $n=3$. Secondly, $GL(3)\times GL(2)$ Rankin-Selberg convolutions appear in important applications, like the Quantum Unique Ergodicity, and so it is important to analyse different aspects of the subconvexity problem for these $L$-functions with the aim of developing techniques that will eventually work in the required scenarios, e.g. spectral aspect subconvexity for symmetric square $L$-functions. Finally, let us also stress, that we are motivated to explore the scope of the delta symbol approach to subconvexity and other related problems. After initial success of the second author \cite{ICM}, the method has been extended, simplified and generalised by several researchers, e.g. see \cite{HN}, \cite{aggarwal}, \cite{AHLQ}, \cite{AHLS}, \cite{KLMS}, \cite{sumit}, \cite{MS}, \cite{prahlad} and \cite{LMS}.    
	\\

	
	The twists of $GL(2)$ $L$-functions by Dirichlet characters, or in other words $GL(2)\times GL(1)$ $L$-functions have been studied extensively in the literature, ever since the breakthrough work of Duke, Friedlander and Iwaniec \cite{DFI}. Hybrid subconvexity have also been studied for these $L$-functions. Since this is the lower rank analogue of the $L$-function we are investigating in this paper, we briefly recall some results in this basic case. Let $ f$ be a $GL(2)$ new form of level $P_2$ and let $ \chi$ be a primitive Dirichlet character of modulus $ P_3$. Suppose $(P_2,P_3)=1$, then $ \mathcal{Q} = P_2 P_3^2$ is the arithmetic conductor of $ L (1/2, f \otimes \chi)$. Different methods are now available to prove hybrid sub-convexity bound, when the levels of forms vary in a relative range, say $ P_2 \sim P_3^\eta$. Blomer and Harcos \cite{BH} used amplification technique to  prove 
	\begin{align*}
		L (1/2, f \otimes \chi) \ll  \mathcal{Q}^{\frac{1}{4} + \epsilon} \left( {Q}^{- \frac{1}{8 (2 + \eta)} } + {Q}^{- \frac{1 - \eta}{4 (2 + \eta)} } \right) 
	\end{align*} for $ 0 < \eta < 1$.   Aggarwal,  Jo and  Nowland \cite{KJN} used classical delta method to prove  
	\begin{align*}
		L (1/2, f \otimes \chi) \ll  \mathcal{Q}^{\frac{1}{4}  - \frac{ 2 - 5 \eta}{20 (2 + \eta)}+  \epsilon} 
	\end{align*} for $ 0 < \eta < 2/5$.  Computing the average of the second moment of $ L (1/2, f \otimes \chi)$ over a family of forms, Hou and Chen \cite{HC} extended the range of $\eta$ to $ 0< \eta < 3/2 - \theta$, where $ \theta$ is any admissible exponent towards the Petersson-Ramanujan conjecture for the Fourier coefficients.  Currently, the result of Hou and Chen  yields the widest range  $ P_2 \ll P_3^{3/2 - \delta}$, but it falls short of the Burgess bound. In a recent work, Khan \cite{RK} not only extended the range of $ P_2$, but also obtained the Weyl bound in the case of $ P_2 \sim P_3$.  By computing the second moment over a family of $ G L(2)$ forms,  Khan proved, in the range $ P_3 \gg P_2^{1/2}$, that
	\begin{align*}
		\sum_{f \in B_{k}^\star (P_2)} \left|  L\left( {1}/{2}, f \otimes \chi\right)\right|^2 \ll_{k, \epsilon}  \mathcal{Q}^\epsilon \left(P_2 + P_3 \right), 
	\end{align*} where $  B_{k}^\star (P_2)$ denote a basis of holomorphic newforms of level $P_2$ and weight $ k$, and $ \mathcal{Q} = P_2 P_3^2$. Recently, during an AIM workshop `` Delta Symbol and Subconvexity", the first and the third author used the  delta symbol approach to prove
	\begin{align*}
		L \left( {1}/{2}, f \otimes \chi \right) \ll_{ \epsilon} \mathcal{Q}^\epsilon  \frac{\sqrt{P_2 P_3} }{\min \left\lbrace \sqrt{P_2}, \sqrt{P_3} \right\rbrace}.  \ \ \ \ 
	\end{align*}
	This is of same strength as Khan \cite{RK}. 
	
%
\begin{ack} 
	This paper originated from discussions at the AIM online workshop `Delta Symbols and Subconvexity' held during 2-6 November 2020. The authors wish to thank the American Institute of Mathematics and the organizers of the workshop for their kind invitation. The authors also thank the participants of the workshop, especially Roman Holowinsky and Philippe Michel, for many enlightening conversations. For this work, S. K. Singh was partially  supported by D.S.T. inspire faculty fellowship no.   DST/INSPIRE/$04/2018/000945$ and R. Munshi was supported by J.C. Bose fellowship JCB/2021/000018 from SERB DST. Lastly the authors would like to thank the anonymous referee for a careful reading of the paper which helped in improving the exposition of the paper. 
\end{ack}

%
%
%
	\section{The Set-up}
Let $F$ and $f$ be as in Theorem \ref{main theorem}. We will denote the normalized Fourier coefficients of $f$ by $\lambda_f(n)$, and that of $F$ by $\lambda_F(n,r)$. The Rankin-Selberg convolution is given by the absolutely converging Dirichlet series 
$$
L \left( s, F \times f \right)=\mathop{\sum\sum}_{n,r=1}^\infty \frac{\lambda_F(n,r)\lambda_f(n)}{(nr^2)^s}
$$ 
in the right half plane $\text{Re}(s)=\sigma>1$. Here it is also given by a degree six Euler product. This function extends to an entire function and satisfies a functional equation of Riemann type. It is known that this Rankin-Selberg convolution is the standard $L$-function of a $GL(6)$ automorphic form \cite{KimS}.

\subsection{Approximate functional equation}
The functional equation gives an expression of the central value $L \left( {1}/{2}, F \times f \right)$ in terms of rapidly decaying series, the so called approximate functional equation (Theorem 5.3 from \cite{IK}). Taking a smooth dyadic subdivision of this expression we get the following. 
	\begin{lemma}\label{AFE}
		Let $ \mathcal{Q} = P_1^2 P_2^3 $ be the arithmetic  conductor attached to the $L$-function $L \left( {1}/{2}, F \times f \right)$. Then, as $\mathcal{Q} \rightarrow \infty$, we have
		\begin{align} \label{aproxi} 
			L \left( {1}/{2}, F \times f \right) \ll_{\epsilon} \mathcal{Q}^{\epsilon}\sum_{r \leq \mathcal{Q}^{(1+2\epsilon)/4}}\frac{1}{r}  \sup_{ N\leq \frac{\mathcal{Q}^{1/2+\epsilon}}{r^2}}\frac{|S_r(N)|}{N^{1/2}} +\mathcal{Q}^{-2021},
		\end{align}
		where $S_r(N)$ is a  sum of the form
		\begin{align} \label{s(n)-sum}
			S_r(N): = \mathop{\sum }_{n=1}^{\infty} \lambda_F(n,r) \lambda_f(n)  V \left(\frac{n}{N}\right),
		\end{align}
		for some smooth function $V$ supported in $[1,2]$ and satisfying $V^{(j)}(x) \ll_{j} 1$. 
	\end{lemma}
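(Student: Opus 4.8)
The plan is to deduce \eqref{aproxi} from the classical approximate functional equation, so the first step is to assemble the analytic input. By \cite{KimS} the Rankin--Selberg convolution $L(s,F\times f)$ is the standard $L$-function of a $GL(6)$ automorphic form; in particular it continues to an entire function of $s$ and, writing $\Lambda(s,F\times f)=\mathcal{Q}^{s/2}\gamma_\infty(s)L(s,F\times f)$ with $\mathcal{Q}=P_1^2P_2^3$ and $\gamma_\infty$ the archimedean factor determined by the spectral parameters of $F$ and $f$, it satisfies a functional equation of Riemann type
\[
\Lambda(s,F\times f)=\epsilon\,\Lambda(1-s,\widetilde F\times\widetilde f),\qquad |\epsilon|=1,
\]
where $\widetilde F$ and $\widetilde f$ are the dual forms, whose Fourier coefficients have the same absolute values as those of $F$ and $f$. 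Since the spectral parameters are fixed, the archimedean conductor is $O_{F,f}(1)$, and all constants below may depend on $F$ and $f$.

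Next I would invoke Theorem~5.3 of \cite{IK} at the central point $s=1/2$, grouping the Dirichlet coefficient of $m^{-s}$ in $L(s,F\times f)$ according to the factorisation $m=nr^2$, so that it equals $\lambda(n,r)\lambda(n)$. This gives
\[
L\!\left(\tfrac12,F\times f\right)=\mathop{\sum\sum}_{n,r\ge1}\frac{\lambda(n,r)\lambda(n)}{(nr^2)^{1/2}}\,W\!\left(\frac{nr^2}{\sqrt{\mathcal{Q}}}\right)+\epsilon\,\mathop{\sum\sum}_{n,r\ge1}\frac{\lambda_{\widetilde F}(n,r)\lambda_{\widetilde f}(n)}{(nr^2)^{1/2}}\,\widetilde W\!\left(\frac{nr^2}{\sqrt{\mathcal{Q}}}\right),
\]
where $W$ and $\widetilde W$ are the smooth functions defined by the usual contour integral involving $\gamma_\infty$; they depend only on $F$ and $f$ and satisfy $y^jW^{(j)}(y)\ll_{j,A}(1+y)^{-A}$ for all $j,A\ge0$, and likewise for $\widetilde W$. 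Because the coefficients of the second sum have the same absolute values as those of the first and $|\epsilon|=1$, it suffices to treat the first.

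Using the Rankin--Selberg bound $\sum_{m\le x}\big|\sum_{nr^2=m}\lambda(n,r)\lambda(n)\big|\ll_{F,f,\varepsilon}x^{1+\varepsilon}$ and the rapid decay of $W$, I would truncate the first sum at $nr^2\le\mathcal{Q}^{1/2+\varepsilon}$ with an error of size $\mathcal{Q}^{-2021}$; in particular only $r\le\mathcal{Q}^{(1+2\varepsilon)/4}$ survive. I would then insert a smooth dyadic partition of unity $1=\sum_N\psi(n/N)$ in $n$, with $\psi$ a fixed bump function supported in $[1,2]$ and $N$ running over powers of two, restricted by the truncation to $N\le\mathcal{Q}^{1/2+\varepsilon}/r^2$. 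For a fixed dyadic block, rescaling $n\mapsto Nn$ and writing $(nr^2)^{-1/2}=r^{-1}N^{-1/2}(n/N)^{-1/2}$, the product $W(nr^2/\sqrt{\mathcal{Q}})\,\psi(n/N)\,(n/N)^{-1/2}$ becomes a weight $V$ supported in $[1,2]$; the derivative bounds on $W$, together with $x\ge1$ on the support of $\psi$, give $V^{(j)}(x)\ll_j1$, so the block contributes $\ll r^{-1}N^{-1/2}|S_r(N)|$. Summing over the $O(\log\mathcal{Q})=O(\mathcal{Q}^\varepsilon)$ dyadic blocks, then over $r$, and treating the dual sum identically, produces the right-hand side of \eqref{aproxi}.

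No serious difficulty arises: the lemma merely recasts the approximate functional equation into the shape needed for the subsequent analysis of $S_r(N)$. The only point that genuinely requires attention is the analytic continuation together with the exact value $\mathcal{Q}=P_1^2P_2^3$ of the conductor, which we take from \cite{KimS}, and the observation that the archimedean data of $F$ and $f$ enter only through the implied constants.
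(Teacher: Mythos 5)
Your proposal is correct and follows essentially the same route the paper uses: the paper also obtains Lemma~\ref{AFE} by applying the approximate functional equation (Theorem~5.3 of \cite{IK}) to $L(1/2,F\times f)$, truncating via the rapid decay of the weight, and then inserting a smooth dyadic partition of unity in $n$. The only thing you spell out that the paper leaves implicit is the verification that, after rescaling $n\mapsto Nn$, the combined weight $W(nr^2/\sqrt{\mathcal{Q}})\psi(n/N)(n/N)^{-1/2}$ yields a bump $V$ supported in $[1,2]$ with $V^{(j)}\ll_j 1$ (which, as you note, follows from the standard bound $y^jW^{(j)}(y)\ll_{j,A}(1+y)^{-A}$ together with $Nr^2\ll\mathcal{Q}^{1/2+\varepsilon}$), so the two arguments agree in substance.
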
 
	This is the usual starting point of the delta symbol approach. Thus, to get subconvexity, it is enough to get some cancellation in the sum
\begin{align*}
	S_r(N)=\sum_{n=1}^\infty \lambda_F(n,r)\lambda_f(n)V(n/N),
\end{align*}
for $N$ near the generic range $N\asymp \mathcal{Q}^{1/2}$.
	\subsection{Delta Symbol}
	Next we separate the oscillations involved in $S_r(N)$. For this we will use a Fourier expansion of the Kronecker delta symbol. For any $Q>1$ one has
\begin{align*} 
	\delta(n)= \frac{1}{Q} \sum_{1 \leq q \leq Q} \frac{1}{q} \, \sideset{}{^\star}{\sum}_{a \,{ \rm mod} \, q} \, e \left(\frac{an}{q}\right) \int_{\mathbb{R}} g(q,x) \,  e\left(\frac{nx}{qQ}\right) \, \mathrm{d}x, 
\end{align*}
where $g(q,x)$ is a smooth function of $x$ satisfying
\begin{align} \label{g properties}
	&g(q,x)=1+h(q,x), \quad \text{with} \ \ \  h(q,x)=O \left(\frac{Q}{q} \left(\frac{q}{Q}+|x|\right)^{B}\right), \notag \\
	& x^j \frac{\partial ^j}{\partial x^j}g(q,x) \ll \log Q \min \left\lbrace \frac{Q}{q}, \frac{1}{|x|}\right\rbrace, \notag  \\
	& g(q,x) \ll |x|^{-B},
\end{align}
for any $B>1$ and $j \geq 1$. (Here $e(z)=e^{2\pi iz}$).  This expansion of $\delta$ is due to Duke, Friedlander and Iwaniec, and one can find details of this in \cite{IK}. Using the third property of $g(q,x)$, we observe that the effective range of the integration over $x$   is $[-Q^{\varepsilon},Q^{\varepsilon}]$. Also it follows that if $q \ll Q^{1-\varepsilon}$ and $x \ll Q^{-\varepsilon}$, then $g(q,x)$ can be replaced by $1$ at the cost of a negligible error term. In the complimentary range, using second property, we have 
$$ x^j \frac{\partial ^j}{\partial x^j}g(q,x) \ll Q^{\varepsilon}.$$
Finally as in \cite{munshi12}, by Parseval and Cauchy, we get 
\begin{align*}
	\int_{\mathbb{R}}(|g(q,x)|+|g(q,x)|^2)\,  \mathrm{d}x \ll Q^{\varepsilon},
\end{align*}
i.e., $g(q,x)$ has average size `one' in the $L^1$ and $L^2$ sense. Applying this expansion and choosing $Q=N^{1/2}$, we get
\begin{align}\label{SN after delta}
	S_r(N)&=\mathop{\sum\sum}_{m,n=1}^\infty \lambda_F(n,r)\lambda_f(m)V(n/N)W(m/N) \delta(n-m) \notag \\
	&=\frac{1}{Q} \int_{\mathbb{R}}\sum_{1 \leq q \leq Q} \frac{g(q,x)}{q} \, \sideset{}{^\star}{\sum}_{a \, {\rm mod} \, q} \sum_{n=1}^{\infty}\lambda_F(n,r)e \left(\frac{n a}{q}\right)e\left(\frac{n x}{qQ}\right)V\left(\frac{n}{N}\right) \notag \\
	& \hspace{2cm}  \times  \sum_{m=1}^{\infty}\lambda_f(m)e \left(\frac{-ma}{q}\right)e\left(\frac{-m x}{q Q}\right)W\left(\frac{m}{N}\right)\mathrm{d}x.
\end{align}

	\subsection{Ideas behind the proof}
	In this section, we will discuss the method and present a sketch of the proof. For simplicity, let's consider the generic case,  i.e., $N=\sqrt{P_1^2P_2^3}$, $r=1$ and $q \asymp Q=\sqrt{N}$.  Thus $S_r(N) $ in \eqref{SN after delta} looks like 
	\begin{align*}
		\frac{1}{Q^2}\sum_{q \sim Q} \,  \sideset{}{^\star}{\sum}_{a \, {\rm mod} \, q} \, \sum_{n \sim N} \lambda_F(n,1)  e\left(\frac{an}{q}\right) \, \sum_{m \sim N} \, \lambda_f(m)  e\left(\frac{-am}{q}\right).
	\end{align*}
	On applying $GL(3)$ Voronoi to the $n$-sum, the  dual length becomes 
	$$n^\star \sim \frac{\mathrm{Conductor}}{\mathrm{Initial \  Length}}=\frac{Q^3P_1}{N}=P_1N^{1/2},$$ 
	and we save 
	$$\frac{\mathrm{Initial \ Length}}{\mathrm{\sqrt{Conductor}}}=\frac{N}{Q^{3/2}P_1^{1/2}}.$$
	Next we apply $GL(2)$ Voronoi formula to the sum over $m$. In this case, the  dual length (generic) is given by 
	$$ m^\star \sim \frac{Q^2P_2}{N}=P_2,$$
	and we save  $\frac{N}{Q\sqrt{P_2}}$ in this step. The resulting character sum is given by
	$$\sideset{}{^\star}{\sum}_{a \, {\rm mod} \, q} S(-\bar{a}\bar{P_1},n^\star;q)e\left(\frac{m^\star\bar{a}\bar{P_2}}{q}\right)=qe\left(\frac{\overline{P_1m^\star}P_2n^\star}{q}\right).$$
		This reduction of the character sum into an additive character with respect to the $GL(3)$ variable $n^\star$ drives the rest of the argument.  We save $\sqrt{Q}$ from the sum over $a$. Hence, in total, we have saved
	$$\frac{N}{Q^{3/2}P_1^{1/2}}\times \frac{N}{Q\sqrt{P_2}} \times \sqrt{Q}=\frac{N}{\sqrt{P_1P_2}}.$$
	In the next step, we apply Cauchy's inequality to the $n^\star$-sum  in the following resulting expression:
	\begin{align*}
		\sum_{q \sim Q}  \sum_{n^\star \sim P_1\sqrt{N}} \lambda_F(n,1) \, \sum_{m^\star \sim P_2} \, \lambda_f(m)e\left(\frac{\overline{P_1m^\star}P_2n^\star}{q}\right).
	\end{align*}
	After Cauchy, we arrive at 
	\begin{align*}
		(P_1\sqrt{N})^{1/2}\left(\sum_{n^\star \sim P_1\sqrt{N}}\Big| \sum_{q \sim Q}\sum_{m^\star \sim P_2} \, \lambda_f(m)e\left(\frac{\overline{P_1m^\star}P_2n^\star}{q}\right) \Big|^2\right)^{1/2},
	\end{align*}
	in which we seek to save $\sqrt{P_1P_2}$ and a little more. In the final step, we apply Poisson summation formula to the $n^\star$-sum. In the zero frequency($n^{\star}=0$), we save $(QP_2)^{1/2}$ which is sufficient provided
	$$(QP_2)^{1/2} >(P_1P_2)^{1/2} \iff  Q >P_1 \iff P_2^{3/2}>P_1.$$ 
	In the non-zero frequency, we save $\left({P_1\sqrt{N}}/{\sqrt{Q^2}}\right)^{1/2}$. From the additive character inside the modulus, which arises due to a specific feature of $GL(3) \times GL(2)$ $L$-functions, we also save $\sqrt{Q}$. Thus we save $(P_1\sqrt{N})^{1/2}$, which is sufficient if
	$$(P_1\sqrt{N})^{1/2} >(P_1P_2)^{1/2} \iff P_1 >P_2^{1/2}.$$
	Hence, we obtain subconvexity in the range $P_2^{1/2}<P_1<P_2^{3/2}$.
	Optimal saving, from Poisson, can be chosen  by taking the minimum of the zero and non-zero frequencies savings. Hence 
	$$S(N) \ll \frac{N\sqrt{P_1P_2}}{\min\left\lbrace \sqrt{QP_2},\, \sqrt{P_1\sqrt{N}}\right\rbrace}= \frac{N}{\min\left\lbrace N^{1/4}/P_1^{1/2},\, N^{1/4}/P_2^{1/2} \right\rbrace},$$
	and consequently
	$$L(1/2,F\times f) \ll \frac{(P_1^2P_2^3)^{1/4}}{\min\left\lbrace {P_2^{3/8}}/{P_1^{1/4}}, \, {P_1^{1/4}}/{P_2^{1/8}}\right\rbrace},$$
	which is best possible when $P_1 \asymp P_2(:=P)$ and $P_1 \neq P_2$. In this case we get 
	$$L(1/2,F\times f) \ll_\epsilon \left(P^5\right)^{1/4-1/40+\epsilon}.$$
	
	\section{ Voronoi Summation Formula}
Our next step involves applications of summation formulas. 
	
	\subsection{GL(3) Voronoi}
	In this section, we analyze the sum over $n$ using $GL(3)$ Voronoi summation formula.
	The following Lemma, except for the notations, is taken from \cite{FZ}. Let $F$ be a Hecke-Maass cusp form of type $(\nu_{1}, \nu_{2})$ for the congruent subgroup $\Gamma_0(P_1)$ of $SL(3,\mathbb{Z})$ with the trivial character. The Fourier coefficients of $F$ and that of its dual $\tilde{F}$ are related by
	$$\lambda_F(r,n)=\lambda_{\tilde{F}}(n,r),$$
	for $(nr,P_1)=1$. Let 
	$${\alpha}_{1} = - \nu_{1} - 2 \nu_{2}+1, \, {\alpha}_{2} = - \nu_{1}+ \nu_{2},  \, {\alpha}_{3} = 2 \nu_{1}+ \nu_{2}-1$$ 
	be the Langlands parameters for $F$ (see Goldfeld \cite{DG} for more details).
	Let $g$ be a compactly supported smooth function on  $ (0, \infty )$ and $\tilde{g}(s) = \int_{0}^{\infty} g(x) x^{s-1} \mathrm{d}x$ be its Mellin transform. For $\ell= 0$ and $1$, we define
	\begin{equation*}
		\gamma_{\ell}(s) := i^\ell \varepsilon(F)P_1^{1/2+s}\frac{\pi^{-3s-\frac{3}{2}}}{2} \, \prod_{i=1}^{3} \frac{\Gamma\left(\frac{1+s+{\alpha}_{i}+ \ell}{2}\right)}{\Gamma\left(\frac{-s-{\alpha}_{i}+ \ell}{2}\right)},
	\end{equation*}
	with $|\varepsilon(F)|=1$. Set $\gamma_{\pm}(s) = \gamma_{0}(s) \mp \gamma_{1}(s)$ and let 
	$$H_{\pm}(y) = \frac{1}{2 \pi i} \int_{(\sigma)} y^{-s} \, \frac{\pi^{-3s-\frac{3}{2}}}{2} \,\gamma_{\pm}(s) \tilde{g}(-s) \, \mathrm{d}s, $$
	where $\sigma > -1 + \max \{-\Re({\alpha}_{1}), -\Re({ \alpha}_{2}), -\Re({\alpha}_{3})\}$.  Let $G_{\pm}(y)=P_1^{1/2}H_{\pm}(y/P_1)$. With the aid of the above terminology, we now state the $GL(3)$ Voronoi summation formula in the following lemma:
	\begin{lemma} \label{gl3voronoi}
		Let $g(x)$ and  $\lambda_{F}(n,r)$ be as above. Let $a, q \in \mathbb{Z}$ with $q> 0, (a,q)=1,$  and let $\bar{a}$ be the multiplicative inverse of $a$ modulo $q$. Suppose $(qr,P_1)=1$. Then we have
		\begin{align*} \label{GL3-Voro}
			\sum_{n=1}^{\infty} \lambda_{F}(n,r) e\left(\frac{an}{q}\right) g(n) 
			=q  \sum_{\pm} \sum_{n_{1}|qr} \sum_{n_{2}=1}^{\infty}  \frac{\lambda_{{F}}(n_1,n_2)}{n_{1} n_{2}} S\left(r \bar{a}\bar{P_1}, \pm n_{2}; qr/n_{1}\right) G_{\pm} \left(\frac{n_{1}^2 n_{2}}{q^3 r}\right)
		\end{align*} 
		where  $S(a,b;q)$ is the  Kloosterman sum which is defined as: 
		$$S(a,b;q) = \sideset{}{^\star}{\sum}_{x \,{\rm mod} \, q} e\left(\frac{ax+b\bar{x}}{q}\right).$$
	\end{lemma}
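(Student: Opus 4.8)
The plan is to obtain the formula from the functional equation of the additively twisted $GL(3)$ Dirichlet series attached to $F$, combined with a Mellin transform, in the spirit of Miller--Schmid, Goldfeld--Li, and (in the level aspect) \cite{FZ}. By Mellin inversion $g(n)=\frac{1}{2\pi i}\int_{(\sigma)}\tilde g(s)\,n^{-s}\,\mathrm{d}s$ for $\sigma$ large, and interchanging sum and integral (legitimate by absolute convergence) one reduces the left-hand side to
\begin{equation*}
\sum_{n=1}^{\infty}\lambda_F(n,r)\,e\!\left(\frac{an}{q}\right)g(n)\;=\;\frac{1}{2\pi i}\int_{(\sigma)}\tilde g(s)\,L_{a/q,\,r}(s,F)\,\mathrm{d}s ,
\end{equation*}
where $L_{a/q,\,r}(s,F):=\sum_{n\ge 1}\lambda_F(n,r)\,e(an/q)\,n^{-s}$ converges absolutely for $\sigma=\Re(s)$ large. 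It then suffices to analytically continue $L_{a/q,\,r}(s,F)$ and to identify its ``dual'' series.

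\textbf{Key steps.} (i) Using the Hecke relations for $SL(3,\mathbb Z)$ together with the coprimality $(qr,P_1)=1$, the auxiliary index $r$ is absorbed: the identity $\lambda_F(n,r)=\sum_{d\mid(n,r)}\mu(d)\lambda_F(n/d,1)\lambda_{\widetilde F}(r/d,1)$ (and $\lambda_F(1,m)=\lambda_{\widetilde F}(m,1)$) lets one pass, up to a short divisor sum, to the standard additively twisted $L$-function; this divisor sum is the origin of the condition $n_1\mid qr$ in the final formula. (ii) For the standard additively twisted $GL(3)$ $L$-function of level $P_1$ one invokes its functional equation: write the completed $L$-function, apply the reflection $s\mapsto -s$, and unfold the additive character. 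Unlike the $GL(1)$ and $GL(2)$ cases, a $GL(3)$ additive twist does not collapse to a single Gauss sum; the Whittaker/mirabolic structure leaves a residual $GL(2)$ exponential sum which, after the standard manipulations, is precisely $S\!\left(r\bar a\overline{P_1},\pm n_2;qr/n_1\right)$, while the sign $\pm$ together with the splitting $\gamma_{\pm}=\gamma_0\mp\gamma_1$ encodes the two parities $\ell=0,1$ and the sign of the dual variable. The explicit powers of $P_1$ and the root number $\varepsilon(F)$ in $\gamma_\ell$ come from the local functional equation at $P_1$. (iii) Substituting the continued expression back into the Mellin integral and shifting the contour to $\Re(s)=-\sigma$ — no poles are crossed since $F$ is cuspidal — turns the $s$-integral of $\tilde g(-s)$ against the ratio of Gamma factors into exactly $H_{\pm}(y)$ at $y=n_1^2 n_2/(q^3 r)$; collecting the overall factor $q$ gives the stated identity, and pulling out $P_1^{1/2}$ after rescaling produces the auxiliary function $G_\pm$.

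\textbf{Main obstacle.} The analytically substantive point is step (ii): establishing the functional equation of the additively twisted $GL(3)$ $L$-function of level $P_1$ with all arithmetic made explicit — in particular, producing the Kloosterman sum $S(r\bar a\overline{P_1},\pm n_2;qr/n_1)$ on the dual side and pinning down the exact dependence on $P_1$, $r$ and $q$. This is precisely the content we import from \cite{FZ}; once it is available, the rest (Mellin inversion, the contour shift, and matching the archimedean integral with $H_\pm$ and $G_\pm$) is routine bookkeeping, the only care needed being consistency of normalizations and tracking of the divisor $n_1\mid qr$.
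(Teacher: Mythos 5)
The paper's ``proof'' of this lemma is a bare reference to \cite{FZ}, and your proposal likewise defers the decisive arithmetic input --- the additively twisted functional equation of $L(s,F)$ at level $P_1$ with the Kloosterman sum $S(r\bar a\overline{P_1},\pm n_2;qr/n_1)$ and the exact $P_1$--dependence made explicit --- to \cite{FZ}, so the two approaches coincide in substance. Your surrounding outline (Mellin inversion, unfolding the additive character via the functional equation, contour shift using cuspidality, and matching the archimedean integral with $H_\pm$) is the standard and correct scaffolding for such a Voronoi formula; the one place where your narration is looser than the source is step (i), where you attribute the condition $n_1\mid qr$ to a preliminary Hecke-relation reduction to $r=1$, whereas in \cite{FZ} the formula is established directly for $\lambda_F(n,r)$ and the divisor condition emerges intrinsically from the dual Fourier expansion rather than from a prior $r$--decomposition --- a cosmetic rather than substantive difference.
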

	\begin{proof}
		See \cite{FZ} for the proof. 
	\end{proof}
	To apply  Lemma \ref{gl3voronoi} in our setup, we need to extract the  oscillations of the  integral transform. To this end, we state the following lemma. 
	\begin{lemma} \label{GL3oscilation}
		Let $g$ be supported in the interval $[X,2X]$ and let $H_\pm$ be defined as above. Then for any fixed integer $K \geq 1$ and $xX \gg 1$, we have
		\begin{equation*}
			H_{\pm}(x)=  x \int_{0}^{\infty} g(y) \sum_{j=1}^{K} \frac{c_{j}({\pm}) e\left(3 (xy)^{1/3} \right) + d_{j}({\pm}) e\left(-3 (xy)^{1/3} \right)}{\left( xy\right)^{j/3}} \, \mathrm{d} y + O \left((xX)^{\frac{-K+2}{3}}\right),
		\end{equation*}
		where $c_{j}(\pm)$ and $d_{j}(\pm)$ are some absolute constants depending on $\alpha_{i}$, for $i=1, 2, 3$.  
	\end{lemma}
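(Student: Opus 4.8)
The plan is to establish Lemma \ref{GL3oscilation} by analyzing the Mellin–Barnes integral defining $H_\pm(x)$ and invoking Stirling's asymptotic expansion for the ratio of gamma functions. First I would recall that
$$H_{\pm}(x) = \frac{1}{2\pi i}\int_{(\sigma)} x^{-s}\,\frac{\pi^{-3s-3/2}}{2}\,\prod_{i=1}^3 \frac{\Gamma\!\left(\frac{1+s+\alpha_i}{2}\right)}{\Gamma\!\left(\frac{-s-\alpha_i}{2}\right)}\,\tilde g(-s)\,\mathrm{d}s,$$
and substitute the Mellin transform $\tilde g(-s) = \int_0^\infty g(y)\,y^{-s-1}\,\mathrm{d}y$, then interchange the order of integration (justified by absolute convergence after shifting $\sigma$ far enough right, using rapid decay of $\tilde g$ on vertical lines since $g$ is smooth and compactly supported). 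This reduces the problem to understanding the Barnes-type integral
$$\frac{1}{2\pi i}\int_{(\sigma)} (xy)^{-s}\,\pi^{-3s}\,\prod_{i=1}^3\frac{\Gamma\!\left(\frac{1+s+\alpha_i}{2}\right)}{\Gamma\!\left(\frac{-s-\alpha_i}{2}\right)}\,\mathrm{d}s$$
as a function of the single variable $z := xy$ with $zX \gg 1$.

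The key step is to apply the duplication formula and Stirling's formula to the gamma quotient. Writing $G(s) := \prod_{i=1}^3 \Gamma\!\left(\frac{1+s+\alpha_i}{2}\right)/\Gamma\!\left(\frac{-s-\alpha_i}{2}\right)$, one checks that $G(s)$ behaves, for $|\Im(s)|$ large on the vertical line, like a constant times $|s|^{3(s+1/2)+\text{const}}$ up to lower-order corrections — more precisely, after combining the six gamma factors pairwise via the reflection and duplication identities, $G(s)$ is essentially $(\text{const})\cdot \Gamma(s+\beta)^{3}\cdot(\text{trig factors})$ for suitable shifts, so that Stirling gives an asymptotic expansion $G(s) \sim \sum_j a_j\, c^{s}\, s^{3s + b - j}$. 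Shifting the contour to the right and applying the standard stationary phase / saddle point analysis (or equivalently the known asymptotics for the Mellin–Barnes representation of the $GL(3)$ Bessel function, as in Li's or the second author's earlier work), the integral in $z$ produces oscillatory terms $e(\pm 3 z^{1/3})$ with amplitudes expanded in descending powers of $z^{1/3}$; truncating the expansion at $j = K$ leaves an error controlled by pushing the contour to $\Re(s) = \sigma_K$ with $\sigma_K$ large, giving the stated $O\!\left((xX)^{(-K+2)/3}\right)$ bound after accounting for the extra factor of $x$ pulled out front and the support of $g$ in $[X,2X]$.

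The main obstacle I anticipate is tracking the exponents and the precise form of the error term with sufficient care. The power $z^{-1/3}$ phase and the cube-root oscillation $e(3z^{1/3})$ are characteristic of the $GL(3)$ Voronoi kernel, but making the expansion uniform in the Langlands parameters $\alpha_i$ (so that the constants $c_j(\pm), d_j(\pm)$ depend only on the $\alpha_i$ and not on $x$ or $X$) requires a careful Stirling analysis where one keeps enough terms; one must verify that each time the contour is shifted past a pole no spurious main term is introduced, and that the tail integral on the shifted line, after bounding $G(s)$ by Stirling and $\tilde g(-s)$ by repeated integration by parts, indeed decays like a power of $xX$ with the claimed exponent. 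Since Lemma \ref{GL3oscilation} is by now standard — essentially a repackaging of the asymptotic expansion of the $GL(3)$ Hankel-type transform that appears in \cite{munshi12} and related papers — I would expect to cite the relevant computation there and only indicate the modifications needed to account for the level $P_1$ (which enters merely through the harmless rescaling $G_\pm(y) = P_1^{1/2} H_\pm(y/P_1)$ and does not affect the shape of the expansion).
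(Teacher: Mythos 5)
The paper does not prove Lemma \ref{GL3oscilation}; it simply cites Lemma 6.1 of \cite{XL}. Your sketch is a correct outline of the proof that actually lies behind that citation: write $H_\pm$ as a Mellin--Barnes integral, insert $\tilde g(-s)=\int_0^\infty g(y)y^{-s-1}\,dy$ and interchange, reduce the gamma quotient $G(s)=\prod_i \Gamma(\tfrac{1+s+\alpha_i}{2})/\Gamma(\tfrac{-s-\alpha_i}{2})$ via reflection and duplication to (roughly) a constant times $c^s\prod_i\Gamma(s+\alpha_i+1)$ up to trigonometric factors, expand by Stirling, and extract the stationary-phase main terms $e(\pm 3(xy)^{1/3})$ with a descending expansion in $(xy)^{-1/3}$; the tail from the shifted contour then gives the error $O((xX)^{(-K+2)/3})$, which is indeed one step smaller than the $j=K$ term after multiplying by the prefactor $x$ and integrating over $y\in[X,2X]$. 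You explicitly note that in practice you would cite the standard computation — which is exactly what the paper does — so the approaches coincide; the only small imprecision is that you drop the index $\ell$ appearing in the paper's definition of $H_\pm$, but since $H_\pm$ is the $\pm$-combination of the $\ell=0,1$ contributions this does not change the structure of the argument.
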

	\begin{proof}
		See  Lemma 6.1 of \cite{XL}.
	\end{proof}
	Plugging the leading term of  Lemma \ref{GL3oscilation} in Lemma \ref{gl3voronoi} and using the resulting expression in \eqref{SN after delta} we see that the sum over $n$ gets transformed into
	\begin{align}\label{dual n sum}
		\frac{N^{2/3}}{P_1^{1/6}qr^{2/3}} \sum_{\pm} \sum_{n_{1}|qr}n_1^{1/3} \sum_{n_{2}=1}^{\infty}  \frac{\lambda_F(n_1,n_2)}{ n_{2}^{1/3}} S\left(r \bar{a}\bar{P_1}, \pm n_{2}; qr/n_{1}\right)\mathcal{I}(...),
	\end{align}
	where 
	\begin{align*}
		\mathcal{I}(...)=\int_0^\infty V(z)e\left(\frac{Nxz}{qQ}\pm \frac{3(Nn_1^2n_2z)^{1/3}}{P_1^{1/3}qr^{1/3}}\right)\mathrm{d}z.
	\end{align*}
We observe that, using integration by parts repeatedly, the above integral is negligibly small if 
	$$n_1^2 n_2 \gg N^\epsilon{\sqrt{N}P_1r}=N^\epsilon\frac{P_1Q^3r}{N}=:N_0.$$
	
		In the case when $P_1|qr$, an appropriate Voronoi summation from \cite{FZ} can still be used. In fact it turns out that our analysis in this paper still goes through with slight modification and the final bound is even better.  As such we proceed to present our analysis only in the coprime case.
	\subsection{GL(2) Voronoi} \label{sec-gl2-vor}
	In this section, we dualize the sum over $m$ using $GL(2)$ Voronoi summation formula. 
	
	\begin{lemma} \label{voronoi hol}
		Let $f\in H_k (P_2)$  be a holomorphic Hecke cuspform with Fourier coefficients $\lambda_f(n)$ and trivial nebentypus. Let $a$ and $q$ be integers with $(aP_2, q)=1$. Let $g$ be a compactly supported smooth bump function on $\mathbb{R}$. Then  we have 
			\begin{equation} 
			\sum_{m=1}^\infty \lambda_f (m) e\left( \frac{-am}{q}\right) g(n) = \frac{1}{q} \frac{\eta_f(P_2)}{\sqrt{P_2}} \sum_{n=1}^\infty \lambda_{f}( m) e\left(  \frac{m\overline{a P_2}}{q}\right) H\left( \frac{m}{P_2q^2}\right),
		\end{equation}
		where $ a \overline{a} \equiv 1 (\textrm{mod} \  q)$, $|\eta_f(P_2)|=1$ and 
		\begin{align*} 
			H (y)=   2 \pi i^k \int_0^\infty g(x)  J_{k-1} \left( 4\pi \sqrt{xy}\right) \mathrm{d}x,
		\end{align*} where $ J_{k-1} $  is the $J$-Bessel function and $k$ is the weight of $f$.   
	\end{lemma}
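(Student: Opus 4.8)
The final statement is the classical $GL(2)$ Voronoi summation formula for a newform of level $P_2$, so the proof is a careful application of the functional equation of the additively twisted $L$-function; one could simply cite \cite{KMV} or \cite{IK}, but I sketch the derivation. The plan is to start from the Dirichlet series
\[
D\!\left(s,\tfrac{-a}{q}\right)=\sum_{m=1}^{\infty}\frac{\lambda_f(m)}{m^{s}}\,e\!\left(\frac{-am}{q}\right),
\]
which converges absolutely for $\Re(s)>1$ by the Rankin--Selberg bound on $\lambda_f(m)$. Writing $g$ via its Mellin transform $\tilde g(s)=\int_0^{\infty}g(x)x^{s-1}\,\mathrm{d}x$ and inverting, the left-hand side of the lemma becomes $\frac{1}{2\pi i}\int_{(\sigma)}\tilde g(s)\,D(s,-a/q)\,\mathrm{d}s$ for $\sigma$ large, where $\tilde g$ is entire with rapid vertical decay since $g$ is smooth and compactly supported in $(0,\infty)$.

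The key input is the functional equation of $D(s,-a/q)$. Because $(q,P_2)=1$, one can pick $\gamma\in SL_2(\mathbb{Z})$ with lower-left entry $q$ such that the conjugate of $\gamma$ by the Fricke involution $W_{P_2}=\bigl(\begin{smallmatrix}0&-1\\P_2&0\end{smallmatrix}\bigr)$ again lies in $\Gamma_0(P_2)$; substituting this relation into the modular transformation law of $f$ and using $f|W_{P_2}=\eta_f(P_2)\,\overline f$ with $|\eta_f(P_2)|=1$ gives an identity of the shape
\[
\left(\frac{q\sqrt{P_2}}{2\pi}\right)^{\!s}\Gamma\!\left(s+\tfrac{k-1}{2}\right)D\!\left(s,\tfrac{-a}{q}\right)
=\epsilon\left(\frac{q\sqrt{P_2}}{2\pi}\right)^{\!1-s}\Gamma\!\left(1-s+\tfrac{k-1}{2}\right)D\!\left(1-s,\tfrac{\overline{aP_2}}{q}\right),
\]
where $a\overline a\equiv 1\ (\mathrm{mod}\ q)$ and $\epsilon$ is a root number of modulus one built out of $i^k$ and $\eta_f(P_2)$. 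This is precisely the shape that yields the conductor $P_2q^2$, the dual additive character $e(n\overline{aP_2}/q)$, and the index $k-1$ of the Bessel kernel.

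Finally I would shift the contour in the integral to $\Re(s)=1-\sigma$ --- no pole is crossed since the additive twist of a cusp form is entire --- apply the functional equation, and expand $D(1-s,\overline{aP_2}/q)$ back into its Dirichlet series. Interchanging sum and integral, the remaining integral $\frac{1}{2\pi i}\int \tilde g(s)\,(\cdots)\,(m/(P_2q^2))^{s-1}\,\mathrm{d}s$ is the standard Mellin--Barnes representation of the Hankel transform with kernel $2\pi i^k J_{k-1}(4\pi\sqrt{xy})$ (cf. \cite{IK}), which produces exactly the function $H$, together with the factor $\tfrac{1}{q}\tfrac{\eta_f(P_2)}{\sqrt{P_2}}$. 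The step requiring the most care is the bookkeeping of the Fricke/Atkin--Lehner normalisation: one must verify that under the hypothesis $(q,P_2)=1$ the root number $\epsilon$ carries no extra Gauss sum and the archimedean factors combine precisely to the displayed $\Gamma$-ratio, since this is what makes the dual additive twist clean. For a Maass form $f$ the argument is identical with the $J$-Bessel kernel replaced by the standard combination of Bessel functions attached to the spectral parameter; as only the holomorphic case is stated here, I would cite the corresponding reference for the precise constants.
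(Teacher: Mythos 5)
The paper's ``proof'' of this lemma is nothing but a citation to the appendix of \cite{KMV}; it does not reproduce an argument. Your sketch --- Mellin inversion of $g$, the functional equation of the additively twisted $L$-series $D(s,-a/q)$ obtained from the modular relation of $f$ under $\Gamma_0(P_2)$ and the Atkin--Lehner/Fricke involution (using $(q,P_2)=1$), a contour shift with no poles since the twist of a cusp form is entire, and finally recognition of the Mellin--Barnes representation of the $J_{k-1}$ Hankel kernel --- is precisely the standard derivation that the cited appendix carries out, with the correct conductor $P_2q^2$, the correct dual additive character $e(n\overline{aP_2}/q)$, and the expected unimodular root number $\eta_f(P_2)$. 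So your proposal is correct and follows essentially the same route the paper is pointing to; the only caution worth keeping in mind, which you already flag, is the careful bookkeeping of the Atkin--Lehner normalisation so that no stray Gauss sum or power of $P_2$ survives, and the check that the archimedean factor $(4\pi^2)^{-s}\Gamma(s+\tfrac{k-1}{2})/\Gamma(1-s+\tfrac{k-1}{2})$ from the Bessel Mellin transform matches the $\Gamma$-ratio in the functional equation.
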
 
	\begin{proof}
		See appendix of \cite{KMV}. 
	\end{proof}
 Extracting the oscillatons of $J_{k-1}$, 
 $$J_{k-1}(2\pi x)=e(x)W_{k-1}(x)+e(-x)\bar{W}_{k-1}(x),$$
 with 
 $$x^j\frac{\mathrm{d}^j}{\mathrm{d}x^j}W_{k-1}(x) \ll_{j, k} 1/\sqrt{x},$$
 we see that $H(y)$ can be essentially replaced by 
\begin{align*}
	H(y)=\frac{2 \pi i^k }{{y}^{1/4}}\int_0^\infty g_1(x)  e\left(\pm 2 \sqrt{xy}\right)\mathrm{d}x,
\end{align*}
in our analysis,  where  $g_1$ is the new weight  function which has compact support and $x^jg_1^{(j)} (x) \ll_j 1$, $j \geq 0$. Applying the above lemma, the sum over $m$ in \eqref{SN after delta} reduces to 
	\begin{align}\label{dual m sum}
		\frac{N^{3/4}{\eta_f(P_2)}}{P_2^{1/4}\sqrt{q}} \sum_{m=1}^\infty \frac{\lambda_{f}( m)}{{m}^{1/4}} e\left(  \frac{m\overline{a P_2}}{q}\right)\int_0^\infty W(y)e\left(\frac{-yNx}{qQ}\right)  e\left(\frac{\pm 2 \sqrt{Nmy}}{qP_2^{1/2}}\right) \mathrm{d}y.
	\end{align}
	Notice the  abuse of notation: the weight function W  is different from the one in  \eqref{SN after delta}. Using  stationary phase analysis we observe that the above integral is negligibly small unless 
	$$m \ll N^{\epsilon}P_2=N^{\epsilon}\frac{Q^2P_2}{N}=:M_0.$$

			Again we will ignore the degenerate case where $P_2|q$ and proceed with the analysis of the generic case. Indeed our analysis works in the degenerate case as well, and the bound that we obtain is even better (as one will expect).

	Now plugging \eqref{dual n sum} and \eqref{dual m sum} in \eqref{SN after delta}, we arrive at 
	\begin{align}\label{SN after voronoi}
		&\frac{N^{17/12}\eta_f(P_2)}{P_1^{1/6}P_2^{1/4}Qr^{2/3}} \sum_{1 \leq q \leq Q} \frac{1}{q^{5/2}} \sum_{\pm} \sum_{n_{1}|qr}n_1^{1/3} \sum_{n_{2} \ll N_0/n_1^2}  \frac{\lambda_{{F}}(n_1,n_2)}{ n_{2}^{1/3}} \sum_{m \ll M_0} \frac{\lambda_{f}( m)}{{m}^{1/4}}\mathcal{C}(...)\mathfrak{I}(...),
	\end{align}
	where the integral transform is given by
	\begin{align*}
		\mathfrak{I}(...)=&\int_{\mathbb{R}}W(x)g(q,x) \int_0^{\infty}W(y) \\
		& \times \int_0^{\infty}V(z)
		e\left(\frac{Nx(z-y)}{qQ}\pm \frac{ 2 \sqrt{Nmy}}{qP_2^{1/2}}\pm \frac{3(Nn_1^2n_2z)^{1/3}}{P_1^{1/3}qr^{1/3}}\right)\mathrm{d}z\,\mathrm{d}y\,  \mathrm{d}x,
	\end{align*}
	and the character sum is given by
	\begin{align*}
		\mathcal{C}(...):&=\sideset{}{^\star}{\sum}_{a \, {\rm mod} \, q}S\left(r \bar{a}\bar{P_1}, \pm n_{2}; qr/n_{1}\right)e\left(  \frac{m\bar{a}\overline{ P_2}}{q}\right) \\
		&=\sum_{d|q}\, d\mu\left(\frac{q}{d}\right)\sideset{}{^ \star}\sum_{\substack{\alpha \, {\rm mod} \, qr/n_1 \\ \bar{P_1}n_1\alpha\equiv-m\bar{P_2} \, {\rm mod} \, d}}e\left(\pm\frac{\bar{\alpha}n_2}{qr/n_1}\right).
	\end{align*}
	\section{Cauchy and Poisson}
	\subsection{Cauchy inequality}
	Now we apply Cauchy's inequality to the $n_2$-sum in \eqref{SN after voronoi}. To this end, we split the sum over $q$ into  dyadic blocks $q\backsim C$ and further writing $q=q_1q_2$ with $q_1|(n_1r)^\infty$, $(n_1r,q_2)=1$, we see that $S_r(N)$   is bounded by 
	\begin{align}\label{Q-dyadic block}
		\sup_{C \ll Q}\frac{N^{17/12}}{P_2^{1/4}P_1^{1/6}Qr^{2/3}C^{5/2}}\sum_{\pm}\sum_{\frac{n_1}{(n_1,r)}\ll C}n_1^{1/3}\sum_{\frac{n_1}{(n_1,r)}|q_1|(n_1r)^\infty}\sum_{n_2\ll {N_0}/{n_1^2}} \frac{|\lambda_F(n_1,n_2)|}{n_2^{1/3}} \notag \\ 
		\times \Big|\sum_{q_2\backsim C/q_1}\sum_{m \ll M_0}\frac{\lambda_{f}(m)}{m^{1/4}}\mathcal{C}(...)\mathfrak{I}(...)\Big|,
	\end{align}
	On applying the Cauchy's inequality to the $n_2$-sum we arrive at
	
	\begin{align}\label{S(N) after cauchy}
		S_r(N)\ll 	\sup_{C \ll Q}\frac{N^{17/12}}{P_2^{1/4}P_1^{1/6}Qr^{2/3}C^{5/2}}\sum_{\pm}\sum_{\frac{n_1}{(n_1,r)}\ll C}n_1^{1/3}\Theta^{1/2}\sum_{\frac{n_1}{(n_1,r)}|q_1|(n_1r)^\infty}\sqrt{\Omega},
	\end{align}
	where 
	\begin{align}\label{theta}
		\Theta=\sum_{n_2\ll N_0/n_1^2} \frac{|\lambda_F(n_1,n_2)|^2}{n_2^{2/3}},
	\end{align} 
	and 
	\begin{align}\label{omega}
		\Omega=\sum_{n_2\ll N_0/n_1^2}\Big|\sum_{q_2\backsim C/q_1}\sum_{m \ll M_0}\frac{\lambda_{f}(m)}{m^{1/4}}\mathcal{C}(...)\mathfrak{I}(...)\Big|^2.
	\end{align}
	\subsection{Poisson}
	We now  apply the Poisson summation formula to the $n_2$-sum  in $\eqref{omega}$. To this end, we smooth out the $n_2$-sum, i.e., we plug in an appropriate smooth bump function, say, $W$. Opening the absolute value square, we get 
	\begin{align*}
		\Omega=&\mathop{\sum \sum}_{q_2,q_2^{\prime}\backsim C/q_1}\mathop{\sum \sum}_{m,m^{\prime} \ll M_0 }\frac{\lambda_f(m)\lambda_f(m^\prime)}{(mm^\prime)^{1/4}} 
		\\
		& \times \sum_{n_2 \in \mathbb{Z}}W\left(\frac{n_2}{N_0/n_1^2}\right)\mathcal{C}(...)\overline{\mathcal{C}}(...)\mathfrak{I}(...)\overline{\mathfrak{I}}(...),
	\end{align*}
	Reducing $n_2$ modulo $q_1q_2q_2^\prime r/n_1:=\gamma$, and using the change of variable $$n_2 \mapsto n_2q_1q_2q_2^\prime r/n_1+\beta, \ \  \mathrm{with}, \ \  0 \leq \beta < q_1q_2q_2^\prime r/n_1,$$ followed by the Poisson summation formula, we arrive at
	\begin{align}\label{final omega}
		\Omega_{}=&\mathop{\sum \sum}_{q_2,q_2^{\prime}\backsim C/q_1}\mathop{\sum \sum}_{m,m^{\prime} \ll M_0}\frac{\lambda_f(m)\lambda_f(m^\prime)}{(mm^\prime)^{1/4}} \sum_{n_2 \in \mathbb{Z}}\sum_{\beta \, \mathrm{mod}\,\gamma}\mathcal{C}(...)\overline{\mathcal{C}}(...)\mathcal{J},
	\end{align}
	where 
	\begin{align*}
		\mathcal{J}&=\int_{\mathbb{R}}W\left(\frac{w\gamma+\beta}{N_0/n_1^2}\right)\mathfrak{I}(...)\overline{\mathfrak{I}}(...)e(-n_2w) \, \mathrm{d}w.
	\end{align*}
	Now changing the variable 
	$$\frac{w\gamma+\beta}{N_0/n_1^2} \mapsto w,$$
	we arive at 
	\begin{align*}
		\mathcal{J}&=\frac{N_0}{n_1^2 \gamma}e\left(\frac{n_2\beta}{\gamma}\right)\int_{\mathbb{R}}W\left({w}\right) \mathfrak{I}(...)\overline{\mathfrak{I}}(...) e\left(\frac{-n_2N_0w}{n_1^2\gamma}\right)\mathrm{d}w.
	\end{align*}
	Plugging this back in \eqref{final omega}, and executing the sum over $\beta$, we arrive at 
	\begin{align}\label{final simplified omega}
		\Omega_{}=\frac{N_0}{n_1^2}\mathop{\sum \sum}_{q_2,q_2^{\prime}\backsim C/q_1}\mathop{\sum \sum}_{m,m^{\prime} \ll M_0}\frac{\lambda_f(m)\lambda_f(m^\prime)}{(mm^\prime)^{1/4}} \sum_{n_2 \in \mathbb{Z}}\mathfrak{C} \, \mathcal{I},
	\end{align}
	where
	\begin{align}\label{character sum}
		\mathfrak{C}=\mathop{\sum \sum}_{\substack{d|q \\ d^{\prime}|q^\prime}}dd^\prime\mu\left(\frac{q}{d}\right)\mu\left(\frac{q^\prime}{d^\prime}\right)\mathop{\sideset{}{^ \star}\sum_{\substack{\alpha \, {\rm mod} \, qr/n_1 \\ \bar{P_1} n_1\alpha\equiv-m\bar{P_2} \, {\rm mod} \, d}} \  \sideset{}{^ \star}\sum_{\substack{\alpha^\prime \, {\rm mod} \, q^\prime r/n_1 \\ \bar{P_1} n_1\alpha^\prime \equiv-\bar{P_2} m^\prime \, {\rm mod} \, d^\prime}}}_{\pm \bar{\alpha}q_2^\prime \mp\bar{\alpha}^\prime q_2\equiv -n_2  \, {\rm mod} \, q_1q_2q_2^\prime r/n_1}1
	\end{align}
	and 
	\begin{align}\label{final integral}
		\mathcal{I}=\int_{\mathbb{R}}W(w) \mathfrak{I}(...)\overline{\mathfrak{I}}(...)\,    e\left(\frac{-n_2N_0w}{n_1q_1q_2q_2^\prime r}\right)\mathrm{d}w.
	\end{align}
	On applying integration by parts, we see that the above integral is negligibly small if 
	\begin{align}\label{N_2}
		n_2 \gg \frac{Q}{q}\frac{n_1q_1q_2q_2^\prime r}{N_0}:=N_2.
	\end{align}
	\section{Bounding the integral}
	In this section we will analyze the integral $	\mathcal{I}$ given in \eqref{final integral}.
	Recall that the  integral $\mathfrak{I}(...)$ is given by
	\begin{align}\label{int before cauchy}
		\mathfrak{I}(...)=&\int_{\mathbb{R}}W(x)g(q,x) \int_0^{\infty}W(y) \notag \\
		& \times \int_0^{\infty}V(z)
		e\left(\frac{Nx(z-y)}{qQ}\pm \frac{ 2 \sqrt{Nmy}}{qP_2^{1/2}}\pm \frac{3(NN_0wz)^{1/3}}{P_1^{1/3}qr^{1/3}}\right)\mathrm{d}z\, \mathrm{d}y\, \mathrm{d}x. 
	\end{align}
	Let's first focus on $x$-integral, i.e.,
	$$\int_{\mathbb{R}}W(x)g(q,x)e\left(\frac{Nx(z-y)}{qQ}\right)\mathrm{d}x.$$
	In the case, $q \ll Q^{1-\epsilon}$, we split the above integral as follows:
	\begin{align*}
		\left(\int_{|x| \ll Q^{-\epsilon}}+ \int_{|x| \gg Q^{-\epsilon}}\right) W(x)g(q,x)e\left(\frac{Nx(z-y)}{qQ}\right)\mathrm{d}x.
	\end{align*}
	For the first part, we can replace $g(q,x)$ by $1$ at the cost of a negligible error term (see \eqref{g properties}) so that we essentially have 
	\begin{align*}
		\int_{ |x| \ll Q^{-\epsilon}} W(x)e\left(\frac{Nx(z-y)}{qQ}\right)\mathrm{d}x.
	\end{align*}
	Using integration by parts, we observe that the above integral is negligibly small unless 
	$$|z-y| \ll \frac{q}{Q}Q^\epsilon.$$
	For the second part, using $g^{(j)}(q,x) \ll Q^{\epsilon j}$, we get the restriction $|z-y| \ll \frac{q}{Q}Q^\epsilon$. In the other case, i.e., $q \gg Q^{1-\epsilon}$, the condition $|z-y| \ll \frac{q}{Q}Q^\epsilon$ is trivially true. Now we write $z$ as $z=y+u$, with $|u| \ll  \frac{q}{Q}Q^\epsilon $. Thus the integral  $\mathfrak{I}(...)$ up to a negligible error term is given by  
	\begin{align}\label{simplified integral }
		\int_{\mathbb{R}}W(x)g(q,x) \int_0^{\infty} \int_{|u| \ll qQ^{\epsilon}/Q}V(y+u)W(y)
		e\left(\frac{Nxu}{qQ}\right) \notag\\
		\times e\left(\pm \frac{ 2 \sqrt{Nmy}}{qP_2^{1/2}}\pm \frac{3(NN_0w(y+u))^{1/3}}{P_1^{1/3}qr^{1/3}}\right)\mathrm{d}u \, \mathrm{d}y\, \mathrm{d}x. 
	\end{align}
	Now we consider the $y$-integral
	$$\int_{\mathbb{R}}V(y+u)W(y)e\left(\pm \frac{ 2 \sqrt{Nmy}}{qP_2^{1/2}}\pm \frac{3(NN_0w(y+u))^{1/3}}{P_1^{1/3}qr^{1/3}}\right)\mathrm{d}y.$$
	Expanding $(y+u)^{1/3}$ into the  Taylor series 
	$$(y+u)^{1/3}=y^{1/3}+\frac{u}{3y^{2/3}}-\frac{u^2}{9y^{5/3}}+\dots,$$
	we observe that it is enough   to consider only the leading term as 
	\begin{align*}
		\frac{3(NN_0)^{1/3}}{P_1^{1/3}qr^{1/3}}\frac{u}{3y^{2/3}}\ll \frac{Qu}{q}\ll Q^\varepsilon.
	\end{align*}
Thus  we  are required to  analyze  the integral
	\begin{align}\label{int y}
		I=\int_{\mathbb{R}}W(y)e\left(\pm \frac{ 2 \sqrt{Nmy}}{qP_2^{1/2}}\pm \frac{3(NN_0wy)^{1/3}}{P_1^{1/3}qr^{1/3}}\right)\mathrm{d}y. 
	\end{align}
	By stationary phase analysis we see that the integral is negligibly small unless  
$$\frac{ 2 \sqrt{Nm}}{qP_2^{1/2}} \asymp  \frac{3(NN_0)^{1/3}}{P_1^{1/3}qr^{1/3}}\approx \frac{Q}{q}.$$ 
Thus the above integral is negligibly small unless $m \sim M_0$ (with $M_0$ as in Section~\ref{sec-gl2-vor}), in which case the above $y$-integral is bounded by  
	$$I \ll \frac{\sqrt{q}}{\sqrt{Q}}.$$ 
	Hence, executing the remaining integrals trivially, and using
	$$\int_{\mathbb{R}}|g(q,x)|\mathrm{d}x \ll Q^{\epsilon},$$ we see that $\mathfrak{I}$ is bounded by 
	$$\mathfrak{I}(...)\ll q^{3/2}  / Q^{3/2}.$$
	On substituting this bound in \eqref{final integral}, we get 
	\begin{align} 
		\mathcal{I}\ll q^3/Q^3.
	\end{align}
	We record the above discussion in the following lemma.
	\begin{lemma}\label{integral bound for zero}
		Let $I$, $\mathfrak{I}(...)$ and $\mathcal{I}$ be as in \eqref{int y}, \eqref{int before cauchy} and \eqref{final integral} respectively. Then we have
		$$I \ll \frac{\sqrt{q}}{\sqrt{Q}},$$
		$$\mathfrak{I}(...)\ll q^{3/2}  / Q^{3/2},$$
		and 
		$$\mathcal{I}\ll q^{3}/Q^{3}.$$
	\end{lemma}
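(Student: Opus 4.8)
The plan is to establish the three bounds in Lemma~\ref{integral bound for zero} sequentially, working from the innermost integral outward, since each bound feeds into the next. The starting point is the triple integral $\mathfrak{I}(\ldots)$ in \eqref{int before cauchy}, and the analysis has already been partially carried out in the text preceding the lemma: the $x$-integral forces $z = y+u$ with $|u|\ll qQ^\varepsilon/Q$ up to negligible error, and a Taylor expansion of $(y+u)^{1/3}$ shows only the leading term matters. So the first substantive step is to analyze the $y$-integral $I$ in \eqref{int y}.

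For the bound on $I$, I would apply the stationary phase method (e.g. the second-derivative test, or a van der Corput type lemma) to the oscillatory integral with phase $\phi(y) = \pm 2\sqrt{Nmy}/(qP_2^{1/2}) \pm 3(NN_0wy)^{1/3}/(P_1^{1/3}qr^{1/3})$. First I would check the sizes of the two terms: using $N_0 = N^\varepsilon P_1 Q^3 r/N$ one computes $(NN_0)^{1/3}/(P_1^{1/3} r^{1/3}) \approx Q$, so the second term is of size $\approx Q/q$ on the support $y\asymp 1$; matching this against $\sqrt{Nm}/(qP_2^{1/2})$ forces $m\asymp Nm/(P_2 Q^2) \cdot (\ldots)$, i.e. $m \sim M_0 = Q^2P_2/N$, and outside this range repeated integration by parts gives negligible contribution. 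When $m\sim M_0$ both terms have size $\asymp Q/q$, so $\phi'$ is typically of that size while $\phi''\asymp Q/q$ as well (the square-root and cube-root terms each contribute second derivatives of order $Q/q$); the standard stationary phase bound then yields $I \ll (Q/q)^{-1/2} = \sqrt{q}/\sqrt{Q}$, possibly after checking there is no unwanted cancellation between the two terms of $\phi''$ — one should verify the stationary point, if any, is non-degenerate, which follows from the $y^{1/2}$ and $y^{1/3}$ profiles having different curvature.

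The second bound, $\mathfrak{J}(\ldots)\ll q^{3/2}/Q^{3/2}$, then follows by inserting the bound on $I$ into \eqref{simplified integral } and estimating the remaining integrals trivially: the $u$-integral contributes its length $\ll qQ^\varepsilon/Q$, the $x$-integral contributes $\int_{\mathbb R}|g(q,x)|\,\mathrm dx \ll Q^\varepsilon$ (the $L^1$-bound on $g$ recorded in Section~2), and the $y$-integral already accounts for the measure via $I$. Multiplying $\sqrt{q}/\sqrt{Q} \cdot q/Q \cdot 1 = q^{3/2}/Q^{3/2}$ (absorbing $Q^\varepsilon$) gives the claim. Finally, $\mathcal{I}(\ldots)\ll q^3/Q^3$ is immediate from \eqref{final integral}: the integral $\mathcal{I}$ is over $W(w)\,\mathfrak{J}(\ldots)\overline{\mathfrak{J}}(\ldots)\,e(\ldots)\,\mathrm dw$ with $W$ of bounded support, so $\mathcal{I}\ll \|\mathfrak{J}\|_\infty^2 \ll (q^{3/2}/Q^{3/2})^2 = q^3/Q^3$.

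The main obstacle is the stationary phase analysis of $I$: one must carefully track which range of the parameter $w$ (ranging over $|w|\asymp 1$ after the rescaling, coming from the $n_2$-variable of Poisson) and $m$ produces a genuine stationary point versus which produces negligible contribution, and confirm that when both oscillatory terms are present and comparable in size they do not conspire to make $\phi''$ anomalously small. A clean way to handle this is to substitute $y\mapsto y$ directly and note that $\phi''(y)$ is a sum of a term $\asymp \sqrt{Nm}/(qP_2^{1/2}) \asymp Q/q$ and a term $\asymp (NN_0w)^{1/3}/(P_1^{1/3}qr^{1/3})\asymp Q/q$ with signs governed by the independent $\pm$ choices; in the case the signs align there is no cancellation and the bound is immediate, while in the opposing-sign case one uses that the $y$-dependence ($y^{-3/2}$ versus $y^{-5/3}$) differs, so $\phi''$ vanishes at most at one point and $\phi'''\asymp Q/q$ there, giving $I\ll (Q/q)^{-1/3}$, which is even better than claimed. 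Either way the stated bound holds, and the rest is bookkeeping.
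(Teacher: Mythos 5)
Your approach is essentially the same as the paper's: stationary phase on the $y$-integral $I$, then multiply by the trivial estimates of the $u$- and $x$-integrals (using the $L^1$-bound on $g(q,\cdot)$) to get $\mathfrak{J}$, and square with the bounded support of $W$ to get $\mathcal{I}$. The bookkeeping $\sqrt{q/Q}\cdot (q/Q)\cdot 1 = q^{3/2}/Q^{3/2}$ and $\mathcal{I}\ll \|\mathfrak{J}\|_\infty^2$ matches the paper exactly.

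One slip, however, needs correcting: you assert that in the opposing-sign case, if $\phi''$ vanishes then the third-derivative test gives $I \ll (Q/q)^{-1/3}$, ``which is even better than claimed.'' Since $Q/q \ge 1$, we have $(Q/q)^{-1/3} \ge (Q/q)^{-1/2}$, so this bound is \emph{weaker}, not stronger; if it were actually needed, the lemma would fail and the downstream estimates would degrade. Fortunately this case does not arise: writing $\phi(y) = a y^{1/2} + b y^{1/3}$, if $\phi'(y_0)=0$, i.e.\ $\tfrac{a}{2}y_0^{-1/2} = -\tfrac{b}{3}y_0^{-2/3}$, then substituting into $\phi''(y_0) = -\tfrac{a}{4}y_0^{-3/2} - \tfrac{2b}{9}y_0^{-5/3}$ gives $\phi''(y_0) = -\tfrac{b}{18}y_0^{-5/3}$, which is nonzero and of size $\asymp Q/q$ automatically. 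So the critical point is never degenerate, the second-derivative argument gives $\sqrt{q/Q}$ directly, and the third-derivative detour should be dropped.
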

	\section{Character Sums}
	In this section, we will estimate  the character sum $\mathfrak{C}$ given in \eqref{character sum},
	\begin{align}
		\mathfrak{C}=\mathop{\sum \sum}_{\substack{d|q \\ d^{\prime}|q^\prime}}dd^\prime\mu\left(\frac{q}{d}\right)\mu\left(\frac{q^\prime}{d^\prime}\right)\mathop{\sideset{}{^ \star}\sum_{\substack{\alpha \, {\rm mod} \, qr/n_1 \\ \bar{P_1} n_1\alpha\equiv-m\bar{P_2} \, {\rm mod} \, d}} \  \sideset{}{^ \star}\sum_{\substack{\alpha^\prime \, {\rm mod} \, q^\prime r/n_1 \\ \bar{P_1} n_1\alpha^\prime \equiv-\bar{P_2} m^\prime \, {\rm mod} \, d^\prime}}}_{\pm \bar{\alpha}q_2^\prime \mp\bar{\alpha}^\prime q_2\equiv -n_2  \, {\rm mod} \, q_1q_2q_2^\prime r/n_1}1.
	\end{align}
	In the case, $n_2=0$, the congruence condition  $$\pm \bar{\alpha}q_2^\prime \mp\bar{\alpha}^\prime q_2\equiv 0  \, {\rm mod} \, q_1q_2q_2^\prime r/n_1$$ implies that $q_2=q_2^{\prime}$ and $\alpha=\alpha^{\prime}$. So we can bound the character sum  $\mathfrak{C}$   as 
	\begin{align} \label{character bound for zero}
		\mathfrak{C}  \ll   \mathop{\sum \sum}_{\substack{d ,d^{\prime} \vert q}} dd'  \mathop{\sideset{}{^\star} \sum_{\alpha \; {\rm mod} \; qr/n_1 } }_{\substack{\bar{P_1}n_1\alpha\equiv-m\bar{P_2} \, {\rm mod} \, d \\ \bar{P_1}n_1\alpha \equiv-\bar{P_2}m^\prime \, {\rm mod} \, d^\prime  }} 1 \ll \mathop{\sum \sum}_{\substack{d ,d^{\prime} \vert q \\ (d,d^{\prime})|(m-m^{\prime})}} dd' \frac{qr}{[d,d^{\prime}]}.
	\end{align}
	For $n_2 \neq 0$, we have the following lemma.
	\begin{lemma}\label{character bound for nonzero}
		Let $\mathfrak{C}$ be as in \eqref{character sum}. Then, for $n_2 \neq 0$,	we have 
		$$\mathfrak{C} \ll \frac{q_{1}^2 \, r (m,n_{1})}{n_{1}} \mathop{\sum \sum}_{\substack{d_{2} \mid (q_{2},  n_{1} q_{2}^{\prime}\mp mn_{2}P_1\bar{P_2}) \\ d_{2}^{\prime} \mid (q_{2}^{\prime},  n_{1} q_{2} \pm m^{\prime} n_{2}P_1\bar{P_2})}} \, d_{2} d_{2}^{\prime}.$$
	\end{lemma}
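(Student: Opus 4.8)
The plan is to disentangle the three nested congruence conditions in $\mathfrak{C}$ by first performing the sum over the divisors $d\mid q$ and $d'\mid q'$, and then estimating the remaining count of pairs $(\alpha,\alpha')$ subject to the two linear congruences and the one congruence $\pm\bar\alpha q_2'\mp\bar\alpha' q_2\equiv -n_2 \bmod q_1q_2q_2'r/n_1$. First I would use the factorisation $q=q_1q_2$, $q'=q_1q_2'$ with $q_1\mid (n_1r)^\infty$ and $(n_1r,q_2)=(n_1r,q_2')=1$, together with the fact that $P_1,P_2$ are coprime to $q$, to split each modulus via CRT into its $q_1$-part (absorbing $n_1,r$) and its $q_2$- (resp.\ $q_2'$-)part. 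The linear conditions $\bar P_1 n_1\alpha\equiv -m\bar P_2\bmod d$ only constrain $\alpha$ modulo $d$; since $d\mid q=q_1q_2$, write $d=d_1d_2$ with $d_1\mid q_1^\infty$-part and $d_2\mid q_2$, and likewise $d'=d_1'd_2'$. The $\mu(q/d)\mu(q'/d')$ forces $d$ close to $q$ and $d'$ close to $q'$ (the only surviving terms have $q/d$, $q'/d'$ squarefree), which is what ultimately produces the clean divisor sum $\sum d_2d_2'$ over $d_2\mid q_2$, $d_2'\mid q_2'$ in the statement, with the $q_1^2 r(m,n_1)/n_1$ factor coming from the $q_1$-part bookkeeping and the number of $\alpha\bmod d_1$ solving the linear congruence.

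Next, for the $q_2q_2'$-part: here $n_1,r$ are invertible, so the linear congruences do not constrain $\alpha\bmod q_2$ or $\alpha'\bmod q_2'$ at all (they are automatically consistent after solving for the $d_2$, $d_2'$ parts). The remaining work is to count solutions of $\pm\bar\alpha q_2'\mp\bar\alpha' q_2\equiv -n_2\bmod q_1q_2q_2'r/n_1$ with $\alpha$ running over a residue class mod $q_2/d_2$ (units) and $\alpha'$ over a residue class mod $q_2'/d_2'$. I would localise this congruence at the primes dividing $q_2$ and $q_2'$ separately. At a prime $p\mid q_2$ with $p\nmid q_2'$, the congruence reads $\pm\bar\alpha q_2'\equiv -n_2+(\text{stuff divisible by }p^{v_p(q_2)})\cdots$ modulo $p^{v_p(q_2)}$ — more precisely one solves for $\bar\alpha$ in terms of $\alpha'$, giving one value of $\alpha$ for each $\alpha'$ unless $p$ divides both $q_2'$ and $mn_2 P_1\bar P_2$-type quantities, which is exactly the source of the gcd conditions $d_2\mid(q_2, n_1q_2'\mp mn_2P_1\bar P_2)$ and $d_2'\mid(q_2', n_1q_2\pm m'n_2P_1\bar P_2)$ in the final bound. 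Carefully tracking which prime powers can be "free" (contributing a full $p^{v_p}$ to the count) versus "determined" yields precisely the factor $d_2d_2'$ summed over those gcd divisors.

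The main obstacle I expect is the bookkeeping at primes dividing $\gcd(q_2,q_2')$ (equivalently primes where both $q_2$ and $q_2'$ are divisible, or where $q_1$ interacts), where the single congruence modulo $q_1q_2q_2'r/n_1$ genuinely couples $\alpha$ and $\alpha'$ and one must be careful not to double-count or lose a power of $p$; this is where the $n_1\mid$ (combined modulus) and the exact shape of the gcd conditions matter. A secondary technical point is making the CRT splitting rigorous when $n_1$ and $r$ share factors with $q_1$ — one has to be precise about $q_1/(n_1/(n_1,r))$ and the like — but this is routine once the coprime skeleton is in place. Modulo these care points, the estimate reduces to: (i) bound the $q_1$-local count by $q_1^2 r(m,n_1)/n_1$ times the trivial count of $d_2,d_2'$-consistent classes, and (ii) bound the $q_2q_2'$-local count of the coupling congruence by $\sum_{d_2,d_2'}d_2d_2'$ with the stated gcd restrictions, then multiply.
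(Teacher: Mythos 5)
Your plan coincides with the paper's proof in its essential structure: use CRT to split $\mathfrak{C}$ into a $q_1$-local factor $\mathfrak{C}^{(1)}$ and a $(q_2,q_2')$-local factor $\mathfrak{C}^{(2)}$; in $\mathfrak{C}^{(1)}$ the coupling congruence determines $\beta'$ from $\beta$, and the linear constraint on $\beta$ (mod $d_1$) together with the $d_1d_1'$ weights gives the factor $q_1^2 r(m,n_1)/n_1$; in $\mathfrak{C}^{(2)}$, coprimality of $n_1$ to $q_2q_2'$ pins $\beta\bmod d_2$ and $\beta'\bmod d_2'$, and substituting these values into the coupling congruence reduced mod $d_2$ (resp.\ $d_2'$) forces $d_2\mid n_1q_2'\mp mn_2P_1\bar{P_2}$ and $d_2'\mid n_1q_2\pm m'n_2P_1\bar{P_2}$. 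The paper arrives at the gcd conditions by directly reducing the coupling congruence modulo $d_2$ and $d_2'$ after substituting the linear constraints, rather than by a prime-by-prime count of "how many $\alpha$ per $\alpha'$" as you sketch, but these are two phrasings of the same elimination. Two small inaccuracies to note: the $\mu(q/d)\mu(q'/d')$ factors are simply bounded by $1$ in the paper and do not play the role you suggest (forcing $d$ close to $q$); and the gcd conditions involve the full combinations $n_1q_2'\mp mn_2P_1\bar{P_2}$ and $n_1q_2\pm m'n_2P_1\bar{P_2}$, not a condition that $p$ divides each term separately. Neither affects the viability of the approach.
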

	\begin{proof}
		  Let's recall  from \eqref{character sum} that
		\begin{align*}
			\mathfrak{C}=\mathop{\sum \sum}_{\substack{d|q \\ d^{\prime}|q^\prime}}dd^\prime\mu\left(\frac{q}{d}\right)\mu\left(\frac{q^\prime}{d^\prime}\right)\mathop{\sideset{}{^ \star}\sum_{\substack{\alpha \, {\rm mod} \, qr/n_1 \\ \bar{P_1} n_1\alpha\equiv-m\bar{P_2} \, {\rm mod} \, d}} \  \sideset{}{^ \star}\sum_{\substack{\alpha^\prime \, {\rm mod} \, q^\prime r/n_1 \\ \bar{P_1} n_1\alpha^\prime \equiv-\bar{P_2} m^\prime \, {\rm mod} \, d^\prime}}}_{\pm \bar{\alpha}q_2^\prime \mp\bar{\alpha}^\prime q_2\equiv -n_2  \, {\rm mod} \, q_1q_2q_2^\prime r/n_1}1
		\end{align*}
		Using the Chinese Remainder theorem,  we observe that $\mathfrak{C}$ can be dominated by a product of two sums $ \mathfrak{C} \ll \mathfrak{C}^{(1)} \mathfrak{C}^{(2)}$,
		where
		$$\mathfrak{C}^{(1)} = \mathop{\sum \sum}_{\substack{d_{1}, d_{1}^{\prime} | q_{1}}} d_{1} d_{1}^{\prime}  \; \mathop{\sideset{}{^\star}{\sum}_{\substack{\beta \; \mathrm{ mod} \; \frac{q_{1}r}{n_{1}} \\  n_{1} \beta \;  \equiv \; - mP_1\bar{P_2} \; \mathrm{mod} \;  d_{1}}} \  \sideset{}{^\star}{\sum}_{\substack{\beta^{\prime} \; \mathrm{mod} \; \frac{q_{1}r}{n_{1}} \\  n_{1}   \beta^{\prime} \;  \equiv \; - m^{\prime}P_1\bar{P_2} \; {\rm mod} \;  d_{1}^{\prime}}}}_{\pm \overline{\beta} q_{2}^{\prime} \mp \overline{\beta^{\prime}} q_{2} + n_{2} \; \equiv \; 0 \;  {\rm mod} \;  {q_{1}r }/{n_{1}} } \; 1 $$
		and
		$$\mathfrak{C}^{(2)} = \mathop{\sum \sum}_{\substack{d_{2} \mid q_{2} \\ d_{2}^{\prime} \mid q_{2}^{\prime}}} d_{2} d_{2}^{\prime}  \; \mathop{\sideset{}{^\star}{\sum}_{\substack{\beta \; {\rm mod} \; q_{2} \\  n_{1} \beta \;  \equiv \; - m P_1\bar{P_2}\; {\rm mod} \;  d_{2}}} \, \sideset{}{^\star}{\sum}_{\substack{\beta^{\prime} \; {\rm mod} \; q_{2}^{\prime} \\  n_{1}   \beta^{\prime} \;  \equiv \; - m^{\prime}P_1\bar{P_2}  \; {\rm mod} \;  d_{2}^{\prime}}}}_{\pm \overline{\beta} q_{2}^{\prime} \mp \overline{\beta^{\prime}} q_{2} + n_{2} \; \equiv \; 0 \;  {\rm mod} \;  q_{2} q_{2}^{\prime} } \; 1.$$
		
		In the second sum $\mathfrak{C}^{(2)}$, since  $(n_{1},q_{2}q_{2}^{\prime})=1$,  we get  $\beta \equiv -m\bar{n_1}P_1\bar{P_2} \ \mathrm{mod} \, d_{2}$ and $\beta^{\prime} \equiv \, -m^{\prime}\bar{n_1}P_1\bar{P_2} \, \mathrm{mod}\, d_{2}^{\prime}$. Now using the congruence modulo $q_{2} q_{2}^{\prime}$, we conclude that
		
		$$\mathfrak{C}^{(2)} \ll  \mathop{\sum \sum}_{\substack{d_{2} \mid (q_{2},  n_{1} q_{2}^{\prime}\mp mn_{2}P_1\bar{P_2}) \\ d_{2}^{\prime} \mid (q_{2}^{\prime},  n_{1} q_{2} \pm m^{\prime} n_{2}P_1\bar{P_2})}} \, d_{2} d_{2}^{\prime}.$$
		In the first sum $\mathfrak{C}^{(1)}$, the congruence condition determines $\beta^{\prime}$ uniquely in terms of $\beta$, and hence 
		$$\mathfrak{C}^{(1)} \ll \mathop{\sum \sum}_{\substack{d_{1}, d_{1}^{\prime} | q_{1}}} d_{1} d_{1}^{\prime} \sideset{}{^\star}{\sum}_{\substack{\beta \; {\rm mod} \; {q_{1}r}/{n_{1}} \\  n_{1} \beta \;  \equiv \; - mP_1\bar{P_2} \; {\rm mod} \;  d_{1}}} \, 1 \ll \frac{r \, q_{1}^2 \, (m,n_{1}) }{n_{1}} .$$
		Hence we have the lemma.
	\end{proof}
	
	\section{Zero frequency}
	In this section we will estimate the contribution of the zero frequency $n_2= 0$ to $\Omega$ in \eqref{final simplified omega},  and thus estimate its total contribution to $S_r(N)$. We have the following lemma.
	\begin{lemma}\label{SN for zero frequency}
		Let $S_r(N)$ be as in \eqref{S(N) after cauchy}. The total contribution of the zero frequency $n_2=0$ to $S_r(N)$ is dominated by $O(r^{1/2}N^{3/4}\sqrt{P_1}).$
	\end{lemma}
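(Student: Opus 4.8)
The plan is to insert the zero-frequency value $n_2=0$ into the formula \eqref{final simplified omega} for $\Omega$ and estimate it using the bounds of Lemma~\ref{integral bound for zero} and the character-sum bound \eqref{character bound for zero}, then feed the resulting bound for $\Omega$ back into \eqref{S(N) after cauchy} together with the Rankin--Selberg bound for $\Theta$ in \eqref{theta}. First I would record that, when $n_2=0$, \eqref{character bound for zero} gives $\mathfrak{C}\ll \sum\sum_{d,d'|q,\ (d,d')|(m-m')} dd'\, qr/[d,d']$; since $q\sim C$ is essentially prime-power times $r$-part, the divisors $d,d'$ are severely constrained, and summing over $m,m'\ll M_0$ (where the diagonal-type condition $(d,d')\mid(m-m')$ saves a factor) one gets $\sum_{m,m'}\mathfrak C \ll C^{2+\varepsilon} r M_0^2/q_1$ or thereabouts, after also remembering the restriction $m\sim M_0$ coming from the stationary-phase analysis around \eqref{int y}. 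Combined with $\mathcal I\ll q^3/Q^3\ll C^3/Q^3$ from Lemma~\ref{integral bound for zero} and the prefactor $N_0/n_1^2$ in \eqref{final simplified omega}, this yields a clean bound of the shape
\begin{align*}
\Omega\big|_{n_2=0}\ll \frac{N_0}{n_1^2}\cdot\frac{1}{\sqrt{M_0}}\cdot\frac{C^3}{Q^3}\cdot C^{2+\varepsilon}\frac{rM_0^2}{q_1},
\end{align*}
where the $1/\sqrt{M_0}$ tracks the $(mm')^{-1/4}$ weight against $m,m'\sim M_0$.

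Next I would substitute this into \eqref{S(N) after cauchy}. The sum over $q_1\mid(n_1r)^\infty$ converges (it contributes $C^\varepsilon$ after accounting for the $n_1/(n_1,r)\mid q_1$ constraint), and the sum over $n_1\ll C$ of $n_1^{1/3}\Theta^{1/2}$ is handled by the Cauchy--Schwarz/Rankin--Selberg estimate $\Theta\ll (N_0/n_1^2)^\varepsilon$ together with $\sum_{n_1\ll C}|\lambda(n_1,1)|^2\ll C^{1+\varepsilon}$ and the bound $N_0=N^\varepsilon P_1 Q^3 r/N$ (recall $Q=\sqrt N$, so $N_0\asymp N^\varepsilon P_1\sqrt N\, r$ and $M_0\asymp N^\varepsilon P_2$, with $C\ll Q$). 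Plugging $C=Q=N^{1/2}$ (the generic worst case, which I would verify is indeed extremal by checking the $C$-dependence is nonnegative) and $M_0\asymp P_2$ turns the bound into a product of powers of $N$, $P_1$, $P_2$, $r$; after the overall normalization by $N^{1/2}$ from Lemma~\ref{AFE} and the $1/r$ there, and tracking the prefactor $N^{17/12}/(P_1^{1/6}P_2^{1/4}Qr^{2/3}C^{5/2})$ in \eqref{S(N) after cauchy}, everything should collapse to $S_r(N)\big|_{n_2=0}\ll r^{1/2}N^{3/4}\sqrt{P_1}$, matching the claim. The $P_2$-dependence is expected to cancel completely here, which is the tell-tale sign of the zero frequency being "trivial" in the $GL(2)$ variable.

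The main obstacle I anticipate is the bookkeeping of the $q_1$ (the $(n_1r)^\infty$-part of $q$) and the divisor sums over $d,d'$ in the character sum: one must be careful that the constraint $(d,d')\mid(m-m')$ genuinely beats the length $M_0$ of the $m$-sum (giving $M_0^2/[d,d']$ rather than $M_0^2$ on the diagonal blocks) and that the $n_1$-sum does not lose more than $C^\varepsilon$. A secondary subtlety is making sure the restriction $m\sim M_0$ (not merely $m\ll M_0$), which comes from the stationary-phase balancing $2\sqrt{Nm}/(qP_2^{1/2})\asymp Q/q$ in \eqref{int y}, is legitimately available at this stage — it is, because in the zero frequency the integral $\mathcal I$ still factors through $\mathfrak J$, hence through $I$ of \eqref{int y}. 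Once these are in place the computation is a routine, if lengthy, tracking of exponents, and I would present only the final assembly rather than every power of $N^\varepsilon$.
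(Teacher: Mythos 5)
Your overall strategy is the paper's: insert $n_2=0$ into \eqref{final simplified omega}, use $\mathcal{I}\ll q^3/Q^3$ from Lemma~\ref{integral bound for zero} together with the character-sum bound \eqref{character bound for zero}, then feed the resulting $\Omega$ back into \eqref{S(N) after cauchy} and control the $n_1,q_1$-sums via \eqref{theta bound}. However, there is a concrete error in the $m,m'$-count, and it is not harmless: it makes your intermediate bound for $\Omega\big|_{n_2=0}$ too small by a factor of roughly $C/M_0\gg 1$, and a careful exponent count would then give $r^{1/2}N^{1/2}\sqrt{P_1P_2}$ rather than the $r^{1/2}N^{3/4}\sqrt{P_1}$ you claim to reach.

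The issue is that the number of pairs $(m,m')$ with $m,m'\sim M_0$ and $(d,d')\mid(m-m')$ is $\ll M_0\bigl(1 + M_0/(d,d')\bigr)$, not $\ll M_0^2/(d,d')$. The additional $M_0$ is the diagonal $m=m'$, which is always present and dominates as soon as $(d,d')\gg M_0$; since $d,d'$ range over divisors of $q\sim C\asymp Q\gg M_0$ in the generic range, this term cannot be dropped. Inserting the full count into \eqref{character bound for zero} gives
\begin{align*}
\sum_{m,m'\sim M_0}\mathfrak{C}\ \ll\ qr\mathop{\sum\sum}_{d,d'\mid q}\bigl(M_0(d,d')+M_0^2\bigr)\ \ll\ q^{1+\varepsilon}\,r\,M_0(q+M_0),
\end{align*}
so after the sum over $q_2\sim C/q_1$ one has the factor $C^2rM_0(C+M_0)/q_1$ in place of your $C^2rM_0^2/q_1$. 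Carrying this through yields
\begin{align*}
\Omega\big|_{n_2=0}\ \ll\ \frac{N_0\,C^5\,r\,M_0^{1/2}(C+M_0)}{n_1^2\,Q^3\,q_1},
\end{align*}
and it is the square root of the extra factor $C+M_0\asymp Q$ (in the operative regime $Q=\sqrt N\gg M_0=P_2$, i.e.\ $P_1>P_2^{1/2}$) that produces the $\sqrt{Q}$ responsible for $N^{3/4}\sqrt{P_1}$. With your version the $P_2$-dependence does \emph{not} cancel, contrary to your expectation; it cancels only after the correct $(C+M_0)$ term is inserted and $C\asymp Q$ dominates. Once you make this correction, the route you sketch is precisely the paper's proof, so the remaining task is simply to carry the exponents through carefully rather than asserting the outcome.
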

	\begin{proof}
		On substituting bounds for $\mathcal{I}$ and $\mathfrak{C}$ from Lemma \ref{integral bound for zero} and  \eqref{character bound for zero} respectively into \eqref{final simplified omega},  we see that the contribution of $n_2=0$ to $\Omega$, is bounded by
		\begin{align*}
			&\ll \frac{N_0C^3}{n_1^2M_0^{1/2}Q^3}\mathop{\sum }_{\substack{q_2 \sim C/q_1 }} qr \mathop{\sum \sum}_{\substack{d ,d^{\prime} \vert q}}(d,d^{\prime})\mathop{\sum \sum}_{\substack{m,m^\prime \sim M_{0} \\ (d,d^{\prime})|(m-m^{\prime}) }}1 \notag \\
			&\ll	\frac{N_0C^3}{n_1^2M_0^{1/2}Q^3}\mathop{\sum }_{\substack{q_2 \sim C/q_1 }} qr  \mathop{\sum \sum}_{\substack{d ,d^{\prime} \vert q}}(M_0(d,d^{\prime})+M_0^2) \\
			&\ll 	\frac{N_0C^5rM_0^{1/2}}{n_1^2Q^3q_1}(C+M_0).
		\end{align*}
		Upon substituting this bound for $\Omega$ in \eqref{S(N) after cauchy}, we get 
		\begin{align*}
			&\sup_{C \ll Q}\frac{N^{17/12}}{P_2^{1/4}P_1^{1/6}Qr^{2/3}C^{5/2}}\sum_{\pm}\sum_{\frac{n_1}{(n_1,r)}\ll C}n_1^{1/3}\Theta^{1/2}\sum_{\frac{n_1}{(n_1,r)}|q_1|(n_1r)^\infty}\left(\frac{N_0C^5rM_0^{1/2}}{n_1^2Q^3q_1}(C+M_0)\right)^{1/2} \notag \\
			& \ll \frac{N^{17/12}}{P_2^{1/4}P_1^{1/6}Qr^{2/3}}\frac{N_0^{1/2}r^{1/2}M_0^{1/4}}{Q^{3/2}}\sum_{\frac{n_1}{(n_1,r)}\ll C}\frac{\Theta^{1/2}}{n_1^{2/3}}\sum_{\frac{n_1}{(n_1,r)}|q_1|(n_1r)^\infty}\frac{1}{\sqrt{q_1}}\left(\sqrt{Q}+\sqrt{M_0}\right) \\
			& \ll \frac{N^{17/12}}{P_2^{1/4}P_1^{1/6}Qr^{2/3}}\frac{N_0^{1/2}r^{1/2}M_0^{1/4}}{Q^{3/2}}\sqrt{Q}\sum_{\frac{n_1}{(n_1,r)}\ll C}\frac{\Theta^{1/2}}{n_1^{7/6}}\sqrt{(n_1,r)}.
		\end{align*}
			Note that (as in \cite{munshi12}) we have
		\begin{align}\label{theta bound}
			\sum_{n_1\ll Cr}\frac{(n_1,r)^{1/2}}{n_1^{7/6}}\Theta^{1/2} \ll \left[\sum_{n_1 \ll Cr}\frac{(n_1,r)}{n_1}\right]^{1/2}\left[\mathop{\sum \sum}_{n_{1}^{2} n_{2} \leq N_0} \frac{\vert \lambda_{F}(n_{1},n_{2})\vert ^{2} }{(n_1^2n_2)^{2/3}}\right]^{1/2}\ll \,N_0^{1/6}.
		\end{align}
		Using this bound, we see that the contribution of $n_2=0$ to $S_r(N)$ is bounded by
		\begin{align*}
			S_r(N)\ll	\frac{N^{17/12}}{P_2^{1/4}P_1^{1/6}Qr^{2/3}}\frac{N_0^{1/2}r^{1/2}M_0^{1/4}}{Q^{3/2}}\sqrt{Q}N_0^{1/6} \ll r^{1/2}N^{3/4}\sqrt{P_1}.
		\end{align*}
	\end{proof}
	\section{Non-Zero Frequencies}
	In this section we will estimate the contribution of the non-zero frequencies $n_2\neq 0$ to $\Omega$ in \eqref{final simplified omega}. We have the following lemma.

	\begin{lemma}\label{SN for non-zero frequency}
		Let $S_r(N)$ be as in \eqref{S(N) after cauchy}. The total contribution of $n_2\neq 0$, to $S_r(N)$ is dominated by $O(\sqrt{r}N^{3/4}\sqrt{P_2}).$
	\end{lemma}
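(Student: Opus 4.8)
\emph{Proof proposal.} The argument runs parallel to the zero-frequency case of Lemma~\ref{SN for zero frequency}. Starting from the expression \eqref{final simplified omega} for $\Omega$, I would restrict the inner sum to $n_2\neq 0$; by \eqref{N_2} the terms with $|n_2|>N_2$ are negligible, so it suffices to bound the range $0<|n_2|\leq N_2$. Into this truncated sum I substitute $\mathcal{I}\ll q^{3}/Q^{3}\ll C^{3}/Q^{3}$ from Lemma~\ref{integral bound for zero} and the character-sum estimate of Lemma~\ref{character bound for nonzero}, and---exactly as in Lemma~\ref{SN for zero frequency}---bound $|\lambda(m)\lambda(m')|/(mm')^{1/4}\ll M_{0}^{-1/2+\varepsilon}$ on the dyadic block $m,m'\sim M_{0}$. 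This reduces the $n_2\neq0$ contribution to $\Omega$ to a non-negative multiple sum over $q_{2},q_{2}'\sim C/q_{1}$, $m,m'\sim M_{0}$, $0<|n_2|\leq N_2$ and the divisors $d_{2}\mid (q_{2},\,n_{1}q_{2}'\mp mn_2P_1\bar{P_2})$, $d_{2}'\mid (q_{2}',\,n_{1}q_{2}\pm m'n_2P_1\bar{P_2})$, weighted by
\begin{align*}
	\frac{N_{0}}{n_{1}^{2}}\cdot\frac{C^{3}}{Q^{3}}\cdot\frac{q_{1}^{2}r\,(m,n_{1})}{n_{1}}\cdot M_{0}^{-1/2+\varepsilon}\cdot d_{2}d_{2}'.
\end{align*}

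The key step is to estimate this multiple sum. Opening the two GCD conditions as divisor sums, I would count the admissible triples $(q_{2},q_{2}',n_2)$ for fixed $d_{2},d_{2}',m,m'$: the divisibility $d_{2}\mid n_{1}q_{2}'\mp mn_2P_1\bar{P_2}$ constrains $q_{2}'$ (given $n_2$) to a residue class, while $d_{2}'\mid n_{1}q_{2}\pm m'n_2P_1\bar{P_2}$ constrains $q_{2}$, so the generic count is $\ll N_2(C/q_{1})^{2}N^{\varepsilon}$, with the degenerate subcases---those in which a progression modulus exceeds the range of its variable, forcing essential uniqueness---contributing no more. Executing also the $m,m'$-sums (the factor $(m,n_{1})$ costs only $n_{1}^{\varepsilon}$, and each $\sum_{m\sim M_{0}}M_{0}^{-1/4}$ contributes $M_{0}^{3/4}$) and inserting $N_2\asymp (Q/q)\,n_{1}q_{1}q_{2}q_{2}'r/N_{0}\asymp QCn_{1}r/(q_{1}N_{0})$, one finds that the factor $N_{0}$ cancels and
\begin{align*}
	\Omega^{(n_2\neq0)}\ll \frac{C^{6}\,r^{2}\,M_{0}^{3/2+\varepsilon}}{Q^{2}\,q_{1}\,n_{1}^{2}}.
\end{align*}

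Finally I substitute $\sqrt{\Omega^{(n_2\neq0)}}\ll C^{3}rM_{0}^{3/4+\varepsilon}/(Q\sqrt{q_{1}}\,n_{1})$ into \eqref{S(N) after cauchy}, carry out the sum over $q_{1}\mid(n_{1}r)^{\infty}$ with $n_{1}/(n_{1},r)\mid q_{1}$ (which yields a factor $\ll\sqrt{(n_{1},r)/n_{1}}\cdot N^{\varepsilon}$), and apply the Rankin--Selberg bound \eqref{theta bound}, namely $\sum_{n_{1}\ll Cr}(n_{1},r)^{1/2}n_{1}^{-7/6}\Theta^{1/2}\ll N_{0}^{1/6+\varepsilon}$. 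Taking the worst case $C=Q$ and inserting $Q=N^{1/2}$, $N_{0}=N^{1/2+\varepsilon}P_{1}r$, $M_{0}=N^{\varepsilon}P_{2}$, all powers of $P_{1}$ and all of the $N$-powers except $N^{3/4}$ cancel, leaving the stated bound $O(N^{3/4+\varepsilon}\sqrt{rP_{2}})$.

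The main obstacle is the arithmetic count in the second paragraph: one must verify that, once all the GCD constraints tying $n_2$ to both $q_{2}$ and $q_{2}'$ are correctly bookkept, the generic term is genuinely of size $N_2(C/q_{1})^{2}$ and that the numerous degenerate subcases do not swamp it. This is precisely the place where the collapse of the character sum recorded in Lemma~\ref{character bound for nonzero}---and hence the special $GL(3)\times GL(2)$ structure that turns the $a$-sum into an additive character in the $GL(3)$ variable---enters in an essential way.
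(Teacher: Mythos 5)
Your overall architecture mirrors the paper's, and the final bookkeeping is correct: plugging $\mathcal{I}\ll q^3/Q^3$, the character-sum bound from Lemma~\ref{character bound for nonzero}, and the Rankin--Selberg average \eqref{theta bound} into \eqref{S(N) after cauchy}, then taking the worst case $C=Q$ with $Q=N^{1/2}$, $N_0\asymp N^{1/2}P_1 r$, $M_0\asymp P_2$, indeed yields $S_r(N)\ll \sqrt{r}N^{3/4}\sqrt{P_2}$, and your intermediate bound $\Omega^{(n_2\neq 0)}\ll C^6 r^2 M_0^{3/2+\varepsilon}/(Q^2 q_1 n_1^2)$ matches what the paper derives in its first case.

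The genuine gap is precisely where you flagged uncertainty: the assertion that ``the degenerate subcases --- those in which a progression modulus exceeds the range of its variable, forcing essential uniqueness --- contribute no more.'' They do not come for free. After writing $q_2\mapsto q_2d_2$, $q_2'\mapsto q_2'd_2'$ and counting $m$ in an arithmetic progression modulo $d_2/(d_2,n_2)$, the cost is $\sum_{\ell\mid n_1}\ell\bigl(1+M_0(d_2,n_2)/(\ell d_2)\bigr)\ll (d_2,n_2)\bigl(n_1+M_0/d_2\bigr)$, and when $Cn_1/q_1\gg M_0$ the $n_1$ term dominates. Feeding this back produces an extra factor of order $\bigl(Cn_1/(q_1M_0)\bigr)^2>1$ over your ``generic'' count, which ruins the target bound in that range. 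The paper handles this regime by a structurally different argument: it promotes the two congruences to exact integer identities $n_1q_2'd_2'P_2\mp mn_2P_1=hd_2$ and $n_1q_2d_2P_2\pm m'n_2P_1=h'd_2'$ with new variables $h,h'\ll P_2n_1$, eliminates $d_2'$, and then extracts the auxiliary divisibilities $P_1n_2\mid\xi:=(hh'-n_1^2q_2q_2'P_2^2)d_2$ and $mh'-\xi/(P_1n_2)\equiv 0\pmod{P_2}$, one constraint bringing in $P_1$ and one bringing in $P_2$. These extra arithmetic conditions recover the missing saving and show that both ranges of $Cn_1/q_1$ land on the same estimate. Without this device (or an equivalent one) the count in your second paragraph does not close, so the proposal as written has a real hole in the range $Cn_1/q_1\gg M_0$, even though your headline $\Omega$-bound and the ensuing computation are correct.
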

	\begin{proof}
		On plugging in the bounds for the character sums and the  integrals from Lemma \ref{character bound for nonzero} and Lemma \ref{integral bound for zero} respectively into \eqref{final simplified omega},  
	we see that the contribution of $n_2\neq 0$ to $\Omega$ (which we denote by $\Omega_{\neq 0}$) is bounded by
		\begin{align*}
		 &  \frac{q_{1}^2 N_0rC^3} {n_{1}^3M_0^{1/2}Q^3}  \mathop{\sum \sum }_{\substack{q_2, \,  q_{2}^{\prime} \sim \frac{C}{q_{1}}  } }  \mathop{\sum \sum}_{\substack{d_{2} \mid q_{2} \\ d_{2}^{\prime} \mid q_{2}^{\prime}}} d_{2} d_{2}^{\prime} \,  \mathop{ \mathop{\sum \ \sum \ \  \ \sum}_{m,m^{\prime} \sim M_{0} \ 0\neq n_2 \ll N_2}}_{\substack{ n_{1} q_{2}^{\prime} \mp m n_{2}P_1\bar{P_2} \, \equiv \,  0 \, {\rm mod} \, d_{2} \\  n_{1} q_{2}\pm m^{\prime} n_{2}P_1\bar{P_2} \, \equiv \,  0 \, {\rm mod} \, d_{2}^{\prime}}} (m,n_{1}) .
		\end{align*}
		Further writing $q_2d_2$ in place of $q_2$ and $q_2^{\prime}d_2^{\prime}$ in place of $q_2^{\prime}$, we arrive at
		\begin{align}\label{omega nonzero}
			\Omega_{\neq 0} & \ll \frac{q_{1}^2 N_0rC^3} {n_{1}^3M_0^{1/2}Q^3} \mathop{\sum \sum}_{d_{2}, d_{2}^{\prime} \ll C/q_{1} } d_{2} d_{2}^{\prime} \, \mathop{\sum \sum }_{\substack{q_{2} \sim \frac{C}{d_{2}q_{1}} \\ q_{2}^{\prime} \sim \frac{C}{d_{2}^{\prime} q_{1}}}}  \mathop{ \mathop{\sum \ \sum \ \  \ \sum}_{m,m^{\prime} \sim M_{0} \ 0 \neq n_2 \ll N_2 }}_{\substack{ n_{1} q_{2}^{\prime} d_{2}^{\prime}\mp m n_{2}P_1\bar{P_2} \, \equiv \,  0 \, {\rm mod} \, d_{2} \\  n_{1} q_{2} d_{2}\pm m^{\prime} n_{2}P_1\bar{P_2} \, \equiv \,  0 \, {\rm mod} \, d_{2}^{\prime}}} (m,n_{1}). 
		\end{align}
		Let's first assume that $Cn_1/q_1 \ll M_0$.  In this case, we  count the number of $m$  in the above expression  as follows:
		\begin{align*}
			\sum_{\substack{m \sim M_{0} \\ n_{1} q_{2}^{\prime} d_{2}^{\prime}\mp m n_{2}P_1\bar{P_2} \, \equiv \,  0 \, {\rm mod} \, d_{2}}} (m,n_{1}) & = \sum_{\ell \mid n_{1}} \ell \,  \sum_{\substack{m \sim M_{0}/\ell \\ n_{1} q_{2}^{\prime} d_{2}^{\prime} \bar{\ell}\mp m n_{2}P_1\bar{P_2} \, \equiv \,  0 \, {\rm mod} \, d_{2}}} 1  \\
			& \ll (d_{2},n_{2}) \, \left(n_{1}+\frac{M_{0}}{d_{2}}\right) \notag.
		\end{align*} 
		In the above estimate we have used the fact $(d_2,n_1)=1$. Counting the number of $m^\prime$ in a similar fashion we get that  the $m$-sum  and $m^{\prime}$-sum in \eqref{omega nonzero}  is dominated by 
		$$ (d_{2}^{\prime}, n_{1} q_{2} d_{2}) \, (d_{2}, n_{2}) \left(n_{1}+\frac{M_{0}}{d_{2}}\right) \left(1+\frac{M_{0}}{d_{2}^{\prime}}\right).$$ 
		Now substituting the above bound in \eqref{omega nonzero}, we arrive at
		\begin{align*}
			\frac{q_{1}^2 N_0rC^3} {n_{1}^3M_0^{1/2}Q^3} \mathop{\sum \sum}_{d_{2}, d_{2}^{\prime} \ll \frac{C}{q_1}} d_{2} d_{2}^{\prime} \, \mathop{\sum \sum }_{\substack{q_{2} \sim \frac{C}{d_{2}q_{1}} \\ q_{2}^{\prime} \sim \frac{C}{d_{2}^{\prime} q_{1}}}} \sum_{0<|n_2|\ll N_2}(d_{2}^{\prime}, n_{1} q_{2} d_{2})  (d_{2}, n_{2}) \left(n_{1}+\frac{M_{0}}{d_{2}}\right) \left(1+\frac{M_{0}}{d_{2}^{\prime}}\right).
		\end{align*}
		Now summing  over $n_2$ and $q_2^{\prime}$, we get the following expression:
		\begin{align*}
			 \frac{q_{1} N_0rN_2C^4} {n_{1}^3M_0^{1/2}Q^3 } \mathop{\sum \sum}_{d_{2}, d_{2}^{\prime} \ll C/q_{1} } d_{2}  \, \mathop{ \sum }_{\substack{q_{2} \sim \frac{C}{d_{2}q_{1}}  }} (d_{2}^{\prime}, n_{1} q_{2} d_{2}) \,  \left(n_{1}+\frac{M_{0}}{d_{2}}\right) \left(1+\frac{M_{0}}{d_{2}^{\prime}}\right).
		\end{align*}
		Next we  sum over $d_2^{\prime}$ to  arrive at 
		\begin{align*}
			 \frac{q_{1} N_0rN_2C^4} {n_{1}^3M_0^{1/2}Q^3 } \mathop{\sum }_{d_{2} \ll C/q_{1} } d_{2}  \, \mathop{ \sum }_{\substack{q_{2} \sim \frac{C}{d_{2}q_{1}}  }}    \left(n_{1}+\frac{M_{0}}{d_{2}}\right) \left(\frac{C}{q_1}+M_0\right).
		\end{align*}
		Finally executing the remaining sums,  we get  
		\begin{align*}
			\Omega_{\neq 0} &\ll \frac{q_{1} N_0rN_2C^4} {n_{1}^3M_0^{1/2}Q^3 } \frac{C}{q_1} \left(\frac{Cn_1}{q_1}+M_0\right) \left(\frac{C}{q_1}+M_0\right) \\
			&\ll \frac{ rC^5} {n_{1}^3M_0^{1/2}Q^3 }\frac{CQn_1r}{q_1}\left(\frac{Cn_1}{q_1}+M_0\right) \left(\frac{C}{q_1}+M_0\right) \\
			&\ll \frac{ r^2C^6QM_0^2} {Q^3M_0^{1/2} }\left(\frac{1}{n_1^2q_1}\right)\ll\frac{ r^2C^5Q^2M_0^2} {Q^3M_0^{1/2} }\left(\frac{1}{n_1^2q_1}\right) .
		\end{align*}
		Upon substituting this bound in place of $\Omega$ in \eqref{S(N) after cauchy}, we arrive at 
		\begin{align*}
			&\sup_{C }\frac{N^{17/12}}{P_2^{1/4}P_1^{1/6}Qr^{2/3}C^{5/2}}\frac{ rC^{5/2}M_0^{3/4}} {\sqrt{Q}}\sum_{\pm}\sum_{\frac{n_1}{(n_1,r)}\ll C}n_1^{1/3}\Theta^{1/2}\sum_{\frac{n_1}{(n_1,r)}|q_1|(n_1r)^\infty}\frac{1}{\sqrt{n_1^2q_1}}.
		\end{align*}
		Note that  (for details see \cite{munshi12})
		\begin{align*}
			\sum_{\frac{n_1}{(n_1,r)}\ll C}n_1^{1/3}\Theta^{1/2}\sum_{\frac{n_1}{(n_1,r)}|q_1|(n_1r)^\infty}\frac{1}{\sqrt{n_1^2q_1}} \ll 	\sum_{n_1\ll Cr}\frac{(n_1,r)^{1/2}}{n_1^{7/6}}\Theta^{1/2} \ll N_0^{1/6}.
		\end{align*}
		On plugging in  this estimate, we get
		\begin{align*}
		 \sup_{C }\frac{N^{17/12}}{P_2^{1/4}P_1^{1/6}Qr^{2/3}C^{5/2}}\frac{ rC^{5/2}M_0^{3/4}} {\sqrt{Q}}N_0^{1/6}\ll \sqrt{r}N^{3/4}\sqrt{P_2}.
		\end{align*}
		Next we consider the case where $Cn_1/q_1\gg M_0$. Here our count for $m$ modulo $d_2$ is not precise and so we need to adopt a different strategy for counting.  
 We  consider the first congruence relation  in \eqref{omega nonzero}
		$$n_{1} q_{2}^{\prime} d_{2}^{\prime}\mp m n_{2}P_1\bar{P_2} \, \equiv \,  0 \, {\rm mod} \, d_{2}.$$
		Note that 
		$$n_{1} q_{2}^{\prime} d_{2}^{\prime}P_2\mp m n_{2}P_1\ll CP_2n_1/q_1+M_0{N_2}P_1 \ll CP_2n_1/q_1+CP_2n_1/q_1 \ll CP_2n_1/q_1 .$$
		Let  
		\begin{align}\label{h variable}
			n_{1} q_{2}^{\prime} d_{2}^{\prime}P_2 -m n_{2}P_1 = h d_{2}, \ \ \ \text{with} \ \ h \ll P_2n_1.
		\end{align}
		Similarly, we  write the second congruence relation  as 
		\begin{align}\label{h prime}
			n_{1} q_{2} d_{2}P_2+ m^{\prime} n_{2}P_1 = h^\prime d_{2}^\prime, \ \ \ \text{with} \ \ h^\prime \ll P_2n_1.
		\end{align}
		Using this congruence, we see that the number of $d_2^\prime$ is given by $O((d_2,h^\prime))$. Next we   multiply $h^\prime$ and $P_2q_2^\prime n_1$ into \eqref{h variable} and \eqref{h prime} respectively to arrive at the following equation:
		\begin{align}\label{removed d2 prime}
			m n_{2}P_1h^\prime + hh^\prime d_{2}=	n_{1}^2 q_{2}q_2^\prime d_{2}P_2^2+P_2q_2^\prime n_1 m^{\prime} n_{2}P_1.
		\end{align}
		We now rearrange the above equation as follows:
		$$P_2q_2^\prime n_1 m^{\prime}-mh^\prime=\frac{(hh^\prime -	n_{1}^2 q_{2}q_2^\prime P_2^2)d_2}{P_1n_2}:=\frac{\xi}{P_1n_2}.$$
		Reducing this equation modulo $h^\prime$, the number of $m^\prime$ turns out to be $$O\left((P_2q_2^\prime n_1,h^\prime)\left(1+\frac{P_2}{h^\prime}\right)\right).$$
		Thus we arrive  at the following bound for $\Omega$:
		\begin{align*}
			\frac{q_{1}^2 N_0rC^3} {n_{1}^3M_0^{1/2}Q^3} \mathop{\sum }_{d_{2} \sim \frac{C}{q_1} } \frac{C^2}{q_1^2} \mathop{\sum \sum }_{\substack{q_{2} \sim C^\epsilon \\ q_{2}^{\prime} \sim C^\epsilon}}  \mathop{ \mathop{\sum \ \sum \ \  \ \sum \sum}_{h,\, h^\prime \ll  P_2n_1 \ m \sim M_0, \, n_2 \ll N_2 }}_{\substack{ \xi \, \equiv \,  0 \,  \mathrm{ mod} \,P_1n_2  \\  mh^\prime -\xi/P_1n_2\, \equiv \,  0 \, \mathrm{ mod} \, P_2 }} (m,n_{1})(d_2,h^\prime)(P_2q_2^\prime n_1,h^\prime)\left(1+\frac{P_2}{h^\prime}\right).
		\end{align*}
		
		Next we count the number of $m$ to get 
		\begin{align*}
			\sum_{\substack{m \sim M_{0} \\  mh^\prime -\xi/P_1n_2\, \equiv \,  0 \, \mathrm{ mod} \, P_2}} (m,n_{1}) &= \sum_{\ell \mid n_{1}} \ell \,  \sum_{\substack{m \sim M_{0}/\ell \\  mh^\prime -\xi/\ell P_1n_2\, \equiv \,  0 \, \mathrm{ mod} \, P_2}} 1  
			&\ll \sum_{\ell \mid n_{1}} \ell \notag.
		\end{align*}
	Also given any $\xi$ (necessarily nonzero) the congruence
	$$\xi=(hh^\prime -	n_{1}^2 q_{2}q_2^\prime P_2^2)d_2 \ \equiv \ 0 \ \mathrm{mod}\ n_2,$$
	implies that there are $O(N^\varepsilon)$ many $n_2$.
We are left with the following expression:
		\begin{align*}
			\Omega_{\neq 0}  \ll \frac{ N_0rC^5} {n_{1}^3M_0^{1/2}Q^3} \sum_{\ell |n_1}\ell \mathop{\sum }_{d_{2} \sim \frac{C}{q_1} }  \mathop{\sum \sum }_{\substack{q_{2} \sim C^\epsilon \\ q_{2}^{\prime} \sim C^\epsilon}}  \mathop{ \mathop{\sum \ \sum }_{h,\ h^\prime \ll  P_2n_1  }}_{\substack{ \xi \, \equiv \,  0 \,  \mathrm{ mod} \,P_1\ell}} (d_2,h^\prime)(P_2q_2^\prime n_1,h^\prime)\left(1+\frac{P_2}{h^\prime}\right).
		\end{align*}
		We now consider the congruence 
		$$\xi=(hh^\prime -	n_{1}^2 q_{2}q_2^\prime P_2^2)d_2 \ \equiv \ 0 \ \mathrm{mod}\ P_1\ell.
		$$
		Let's first assume that $d_2 \ \equiv \ 0 \ \mathrm{mod} \ P_1$. Then first counting the number of $d_2$ followed by $h$ and $h^\prime$,  we see that  the number of tuples $(h,h^\prime, d_2)$ is given by 
		$O(\frac{P_2^2n_1^2C}{P_1q_1\ell}).$
		Lastly executing the sum over $\ell$, we arrive at
		\begin{align*}
		\frac{ N_0rC^5} {n_{1}^2M_0^{1/2}Q^3}\frac{P_2^2n_1^2C}{P_1q_1} .
		\end{align*}
		Now let $(d_2,P_1\ell)=1$. Then we have 
		$$hh^\prime -	n_{1}^2 q_{2}q_2^\prime P_2^2 \ \equiv \ 0 \ \mathrm{mod}\ P_1\ell,$$
		from which the number of $h$ turns out to be $P_2n_1/P_1\ell$. Next counting the number of $d_2$ followed by number of $h^\prime$, we see that the number of tuples $(h,h^\prime, d_2)$ is given by 
	$O(\frac{P_2^2n_1^2C}{P_1q_1\ell}).$ 	Hence, in this case also, we get the same bound. Thus  we conclude that
	\begin{align*}
		\Omega_{\neq 0}\ll \frac{ N_0rC^5} {n_{1}^2M_0^{1/2}Q^3}\frac{P_2^2n_1^2C}{P_1q_1}.
	\end{align*}
Upon substituting this bound in \eqref{S(N) after cauchy}, we arrive at 
		\begin{align*}
			&\sup_{C \ll Q}\frac{N^{17/12}}{P_2^{1/4}P_1^{1/6}Qr^{2/3}C^{5/2}}\left(\frac{ N_0rC^5} {Q^3M_0^{1/2}}\frac{P_2^2C}{P_1}\right)^{1/2} \sum_{\pm}\sum_{\frac{n_1}{(n_1,r)}\ll C}n_1^{1/3}\Theta^{1/2}\sum_{\frac{n_1}{(n_1,r)}|q_1|(n_1r)^\infty}\frac{1}{\sqrt{n_1q_1}} \\
			&\ll \frac{N^{17/12}}{P_2^{1/4}P_1^{1/6}Qr^{2/3}}\left(\frac{ N_0r} {Q^3M_0^{1/2}}\frac{P_2^2Q}{P_1}\right)^{1/2} \sum_{\frac{n_1}{(n_1,r)}\ll C}\frac{\Theta^{1/2}}{n_1^{2/3}}(n_1,r)^{1/2}.
		\end{align*}
		Now following the argument of \cite{munshi12},  we conclude that 
		\begin{align*}
			\sum_{\frac{n_1}{(n_1,r)}\ll C}\frac{\Theta^{1/2}}{n_1^{2/3}}(n_1,r)^{1/2} \ll N_0^{1/6} \sum_{\frac{n_1}{(n_1,r)}\ll C}\frac{(n_1,r)^{1/2}}{n_1} \ll N_0^{1/6}.
		\end{align*}
			Hence the contribution of the non-zero frequency to $S_r(N)$ is dominated by
		\begin{align*}
		  \frac{N^{17/12}}{P_2^{1/4}P_1^{1/6}Qr^{2/3}}\left(\frac{ N_0r} {Q^3M_0^{1/2}}\frac{P_2^2Q}{P_1}\right)^{1/2} N_0^{1/6}\ll \sqrt{r} N^{3/4}\sqrt{P_2}.
		\end{align*}
	\end{proof}
	\section{Conclusion}
	Finally, plugging bounds from Lemma \ref{SN for zero frequency} and Lemma \ref{SN for non-zero frequency} into Lemma \ref{AFE}, we get
	\begin{align*}
		L \left( {1}/{2}, F \times f \right) &\ll_{\epsilon} \mathcal{Q}^{\epsilon}\sum_{r \leq \mathcal{Q}^{(1+2\epsilon)/4}}\frac{1}{r} \sup_{ N\leq \frac{\mathcal{Q}^{1/2+\epsilon}}{r^2}} \sqrt{r}N^{1/4}(\sqrt{P_1}+\sqrt{P_2})\\
		& \ll \sum_{r \leq \mathcal{Q}^{(1+2\epsilon)/4}}\frac{1}{r} \mathcal{Q}^{1/8+\epsilon}(\sqrt{P_1}+\sqrt{P_2}) \\
		& \ll \mathcal{Q}^{1/8+\epsilon}(\sqrt{P_1}+\sqrt{P_2}) \ll \mathcal{Q}^{1/4+\epsilon}\left( \frac{P_1^{1/4}}{P_2^{3/8}}+\frac{P_2^{1/8}}{P_1^{1/4}}\right).
	\end{align*}
This establishes Theorem~\ref{main theorem}.
	


\begin{thebibliography}{999}
		
		\bibitem{KJN}
		K. Aggarwal, Y. Jo, K. Nowland: \emph{Hybrid level aspect subconvexity for $GL(2) \times GL(1)$ Rankin-Selberg $L$-functions},  Hardy-Ramanujan J. { \bf 41} (2018), 104--117. 
		
	\bibitem{aggarwal}
	K. Aggarwal: \emph{Weyl bound for $GL(2)$ in $t$-aspect via a simple delta method}, J. Number Theory {\bf 208} (2020) 72--100.
		
		
		\bibitem{AHLQ}
		K. Aggarwal, R. Holowinsky, Y. Lin, Z. Qi:
		\emph{A Bessel delta method and exponential sums for $GL(2)$}, Q. J. Math. {\bf 71} (2020) 1143--1168.
		
		\bibitem{AHLS}
		K. Aggarwal, R. Holowinsky, Y. Lin, Q. Sun:
		\emph{The Burgess bound via a trivial delta method}, Ramanujan J. {\bf 53} (2020) 49--74.
		
		
		\bibitem{BH}
		V. Blomer and G. Harcos: \emph{Hybrid bounds for twisted $L$-functions}, J. Reine Angew. Math. { \bf 621} (2008), 53--79.
		
	
		\bibitem{Burgess}
		D. A. Burgess: \emph{On character sums and $L$-series, II},  Proc. London Math. Soc. { \bf 313} (1963), 24--36.
		\bibitem{DFI}
		W. Duke, J. Friedlander, H. Iwaniec: \emph{Bounds for automorphic $L$-functions}, Invent. Math. { \bf 112} (1993), 1--8.
		\bibitem{DFI-2} 
		W. Duke, J. Friedlander, H. Iwaniec: \emph{Bounds for automorphic $L$-functions. II}, Invent. Math. { \bf 115} (1994), 219--239.
		
		\bibitem{DFI-2.1} 
		W. Duke, J. Friedlander, H. Iwaniec: \emph{Erratum: Bounds for automorphic $L$-functions. II}, Invent. Math. { \bf 140} (2000), 227--242.
		
		\bibitem{DG}
		D. Goldfeld : \emph{Automorphic forms and L-Functions for the group $GL(n, \mathbb{R})$} ( With an appendix by Kevin A. Broughan).  Cambridge Studies in Advanced Mathematics, 99. Cambridge University Press, Cambridge, 2006. xiv+493 pp. ISBN: 978-0-521-83771-2; 0-521-83771-5.
		

		
		\bibitem{HC}
		F. Hou and  B. Chen: \emph{Level aspect subconvexity for twisted L-functions},  J. Number Theory { \bf 203} (2019), 12--31.
		
		\bibitem{HM}
		R. Holowinsky and R. Munshi: \emph{Level aspect subconvexity for Rankin-Selberg L-functions}, Tata Inst. Fundam. Res. Stud. Math. { \bf 22 }, (2013), 311--334.
		
	
		
		\bibitem{HN}
		R. Holowinsky and P. Nelson: \emph{Subconvex bounds on GL3 via degeneration to frequency zero}, Math. Ann. {\bf 372} (2018), 299–-319. 
		
		
		\bibitem{IK}
		H. Iwaniec and E. Kowalski: \emph{Analytic Number Theory}, American Mathematical Society Colloquium Publication {\bf 53}, American Mathematical Society, Providence, RI, 2004. 
		
		
		
		\bibitem{RK}
		R. Khan: \emph{ Subconvexity Bounds for Twisted $L$-functions}, The Quarterly Journal of Mathematics,{ \bf 72}, ( 2021), no 3,  1133--1145, https://doi.org/10.1093/qmath/haaa069.
		
		
		
		\bibitem{KimS}
		H. Kim and F. Shahidi: \emph{Functorial products for $GL_2\times GL_3$ and the symmetric cube for $GL_2$} (With an appendix by Colin J. Bushnell and Guy Henniart), Ann. of Math. {\bf 155} (2002), no. 3, 837--893.
		
	
		
		
		\bibitem{KMV}
		E. Kowalski,   P. Michel  and   J. Vanderkam: \emph{Rankin Selberg $L$ functions in the level aspect},  Duke Math. J. {\bf 114} (2002), no. 1, 123--191.
		
		\bibitem{KLMS}
		E. Kowalski, Y. Lin, P. Michel and W. Sawin:
		\emph{Periodic twists of $GL_3$-automorphic forms}, Forum Math. Sigma {\bf 8} (2020) Paper no. e15 39 pp. 
		
		\bibitem{sumit}
		S. Kumar: \emph{Subconvexity bound for $GL(3) \times GL(2)$ $L$-functions in $GL(2)$ spectral aspect}, arXiv: 2007.05043.
		
		
		\bibitem{KMS}
		S. Kumar,  K. Mallesham and  S. K. Singh: \emph{Sub-convexity bound for $GL(3) \times GL(2)$ $L$-functions: $GL(3)$-spectral aspect}, https://arxiv.org/abs/2006.07819, 2020.
		
		\bibitem{KMS2}
		S. Kumar; Kummari M.; S. K. Singh: \emph{Sub-convexity bound for $GL(3) \times GL(2)$ $L$-functions:  the depth aspect},   Math. Z.  {\bf 301} (2022), no. 3, 2229-–2268. 
		
		\bibitem{XL}
		X. Li: \emph{The central value of the Rankin-Selberg $L$-functions}. Geom. funct. anal. {\bf 18} (2009), 1660--1695.
		
		\bibitem{LMS}
		Y. Lin; Philippe Michel and  Will Sawin: \emph{Algebraic twists of $GL3 \times GL2$ L-functions},   arXiv:1912.09473.
		\bibitem{michel-pc}
		P. Michel: \emph{Analytic number theory and families of automorphic L-functions},  IAS/Park City Math. Ser., { \bf 12}, Amer. Math. Soc., Providence, RI, (2007), 181–295. 
		
		\bibitem{MV}
		P. Michel and A. Venkatesh: \emph{The subconvexity problem for $GL(2)$}. Publ. Math. IHES {\bf 111} (2010),  171--280.
		
		\bibitem{annals}
		R.~Munshi: \emph{The circle method and bounds for $L$-functions-IV: subconvexity for twist of $GL(3)$ $L$-functions}, Ann. Math {\bf 182} (2015),  617--672.
		
		\bibitem{ICM}
		R. Munshi: \emph{The subconvexity problem for $L$-functions}, Proceedings of the International Congress of Mathematicians (ICM 2018). April 2019, 363--376.
		
		\bibitem{munshi12} R. Munshi: \emph{Subconvexity for $GL(3) \times GL(2)$ $L$-functions in $t$-aspect},   J. Eur. Math. Soc. (JEMS) {\bf 24} (2022), no. 5, 1543–-1566.
		\bibitem{MS}
		R. Munshi and S. K. Singh: \emph{Weyl bound for p-power twist of GL(2) L-functions}, Algebra Number Theory, {\bf 13} (2019), no. 6, 1395–1413.
		
		\bibitem{prahlad} P. Sharma: \emph{Subconvexity for $GL(3) \times GL(2)$ twists in level aspect} ( With an appendix by Will Sawin). Adv. Math. {\bf 404} (2022), part B, Paper No. 108420, 47 pp.
		
	
		\bibitem{venkatesh}
		A. Venkatesh: \emph{Sparse equidistribution problems, period bounds and subconvexity},  Ann. of Math. (2) { \bf 172} (2010), 989–1094.
		
		\bibitem{Weyl}	
		H. Weyl : \emph{Zur absch\"{a}tzung von $\zeta(1+it)$}, Math. Z. {\bf 10} (1921), 88--101.
		\bibitem{FZ}
		F. Zhou: \emph{The Voronoi formula on GL(3) with ramification}, arxiv, 2018, arxiv:1806.10786.
		
		
	
		
		
	\end{thebibliography}
\end{document}